\newcommand{\C}{\mathcal C}
\newcommand{\D}{\mathcal D}
\newcommand{\op}{\mathsf{op}}
\newcommand{\cop}{\mathsf{cop}}
\newcommand{\I}{\mathbbm 1}
\newcommand{\ot}{{\otimes}}
\newcommand{\pl}{{\oplus}}
\newcommand{\eps}{\epsilon}
\newcommand{\e}{\mathsf{e}}
\newcommand{\db}{\mathsf{d}}
\newcommand{\hol}{\mathbbm H^l} 
\newcommand{\hor}{\mathbbm H^r} 
\newcommand{\chol}{\mathfrak H^l} 
\newcommand{\chor}{\mathfrak H^r} 
\newcommand{\bhol}{\overline{\mathbbm H}^l} 
\newcommand{\bhor}{\overline{\mathbbm H}^r} 
\newcommand{\nrr}{\nu^r_R} 
\newcommand{\nlr}{\nu^l_R} 
\newcommand{\nrl}{\nu^r_L} 
\newcommand{\nll}{\nu^l_L} 
\newcommand{\rotatedadj}{\rotatebox[origin=c]{63}{\ensuremath\vdash}} 
\newtheorem{theorem}{Theorem}
\newtheorem{definition}[theorem]{Definition}
\newtheorem{proposition}[theorem]{Proposition}
\newtheorem{lemma}[theorem]{Lemma}
\newtheorem{remark}[theorem]{Remark}
\newtheorem{corollary}[theorem]{Corollary}
\newtheorem{example}[theorem]{Example}
\begin{document}

\title{On Hopf adjunctions, Hopf monads and Frobenius-type properties\thanks{
This work was supported by a grant of the Romanian National Authority for Scientific Research, CNCS-UEFISCDI, project number PN-II-RU-TE-2012-3-0168.}}

\author{Adriana Balan\thanks{%
Simion Stoilow Institute of Mathematics of the Romanian Academy, Bucharest, Romania, and University Politehnica of Bucharest, Romania}}

\date{}

\maketitle


\begin{abstract}
Let $U$ be a strong monoidal functor between monoidal categories. If it has both a left adjoint $L$ and a right adjoint $R$, we show that the pair $(R,L)$ is a linearly distributive functor  and $(U,U)\dashv (R,L)$ is a linearly distributive adjunction, if and only if $L\dashv U$ is a Hopf adjunction and $U\dashv R$ is a coHopf adjunction. 

We give sufficient conditions for a strong monoidal $U$ which is part of a (left) Hopf adjunction $L\dashv U$, to have as right adjoint a twisted version of the left adjoint $L$. In particular, the resulting adjunction will be (left) coHopf. One step further, we prove that if $L$ is precomonadic and $L\I$ is a Frobenius monoid (where $\I$ denotes the unit object of the monoidal category), then $L\dashv U\dashv L$ is an ambidextrous adjunction, and $L$ is a Frobenius monoidal functor. 
 
We transfer these results to Hopf monads: we show that under suitable exactness assumptions, a Hopf monad $T$ on a monoidal category has a right adjoint which is also a Hopf comonad, if the object $T\I$ is dualizable as a free $T$-algebra. In particular, if $T\I$ is a Frobenius monoid in the monoidal category of $T$-algebras and $T$ is of descent type, then $T$ is a Frobenius monad and a Frobenius monoidal functor. 


\smallskip

{\bf Keywords} (co)monoidal functor; Hopf (co)monad; (co)Hopf adjunction; linearly distributive functor; Frobenius monoid; Frobenius monad; Frobenius monoidal functor

\smallskip
{\bf MSC2010}
18D10
;
18C15
; 
18A40
; 
16W30

\end{abstract}




\section{Introduction}\label{intro}

Hopf monads on monoidal categories, as introduced and studied in~\cite{blv:hmmc,bv:hm}, are categorical generalizations of Hopf algebras. Several important results on Hopf algebras have their counterpart for Hopf monads, like the fundamental theorem of Hopf modules or Maschke's theorem on semisimplicity. Another important result in the theory of Hopf algebras is that finite-dimensional Hopf algebras are also Frobenius algebras~\cite{larson-sweedler,pareigis}. 

\medskip

It appears thus natural to ask whether Hopf monads are also Frobenius.  Instead of a \emph{finite dimensional} Hopf algebra (hence having a dual, which is itself a Hopf algebra), we shall look for a Hopf monad with \emph{right adjoint}, under the additional proviso that this right adjoint (comonad) is itself \emph{a Hopf comonad}~\cite{cls:hmc}.  We call such a \emph{biHopf monad}. Hopf monads on autonomous categories automatically have right adjoints and are biHopf monads, but Hopf monads on arbitrary monoidal categories are not necessarily biHopf, not even if they have right adjoints, and we provide a simple example of such. 

\medskip

Before proceeding, we need to understand what do we mean by a (bi)Hopf monad $T$ on a monoidal category $\C$ to be Frobenius. This involves the two monoidal structures which play a role in the definition of a Hopf monad.%
\footnote{
Note however that there exists another notion of Hopf monad~\cite{mes-wis:nbhm} on arbitrary categories.
}
 
First, it is a monad, thus a monoid in the \emph{monoidal} category of endofunctors $[\C, \C]$, endowed with composition as tensor product. So one may ask when a Hopf monad is also a Frobenius monad, that is, a Frobenius monoid in $[\C, \C]$~\cite{street:frob}. In particular, it should be self-adjoint. 
 
Second, $T$ carries a comonoidal structure, thus the monoidal structure of its base category $\C$ is also involved. The corresponding notion of interest is that of a Frobenius monoidal functor, which is simultaneously a monoidal and a comonoidal functor, subject to two coherence conditions which ensure that such functor preserves dual objects and Frobenius monoids. 

Hopf monads induced by (tensoring with) finite dimensional Hopf algebras on the category of (finite dimensional) vector spaces are both Frobenius monads and Frobenius monoidal functors. Thus our interest is naturally motivated. 

\medskip

Hopf monads are completely definable in terms of Hopf adjunctions~\cite{blv:hmmc}. BiHopf monads on monoidal categories are completely determined by biHopf adjunctions, that is, triple adjunctions $L\dashv U\dashv R$ with $U$ strong monoidal, such that $L\dashv U$ is a Hopf adjunction and $U\dashv R$ is a coHopf adjunction. The readers should think of $L$ as being the ``induction'' functor and of $R$ as being the ``coinduction'' functor, as these are usually called in Hopf algebra theory. If $U$ is a Frobenius functor (its left and right adjoints are isomorphic), then the associated Hopf monad $T=UL$ is a Frobenius monad, and conversely, for a Frobenius monad $T$, the forgetful functor from the associated category of algebras is Frobenius~\cite{street:frob}. One step further, if $L$ is a Frobenius monoidal functor, then the associated Hopf monad $T=UL$ is again Frobenius monoidal, being a composite of such functors. 

\medskip

During the evolution of this paper, it turned out to be more natural to work with Hopf adjunctions instead of Hopf monads, and subsequently deducing the corresponding results for Hopf monads. 

\medskip

For any triple adjunction $L\dashv U\dashv R$ between monoidal categories, we observe that the pair $(R, L)$ constitutes a linearly distributive functor in the sense of~\cite{cockett-seely:wdc}, and that there is a linearly distributive adjunction $(U,U)\dashv (R,L)$ between degenerate linearly distributive categories, that is, between monoidal categories, if and only if the adjunction $L\dashv U\dashv R$ is biHopf.%
\footnote{%
Our first result connecting biHopf adjunctions and linearly distributive functors should not be surprising. After all, linearly distributive functors capture much of the linear logic structures, while (representations of) Hopf algebras are known to produce models of (noncommutative or/and cyclic) linear logic~\cite{blute:hopf}.%
} 
A linearly distributive functor is a pair formed by a monoidal functor and a comonoidal functor (co)acting on each other, subject to several coherence conditions ensuring that these (co)actions are compatible with the (co)monoidal structure. Each Frobenius monoidal functor between arbitrary monoidal categories produces a degenerate linearly distributive functor, that is, with equal components, where the (co)strengths are provided by the (co)monoidal structure morphisms~\cite{egger:star}. 

\medskip

A triple adjunction $L\dashv U\dashv R$ between symmetric monoidal closed categories, with $U$ strong monoidal and strong closed, has sometimes been called a Wirthm\"uller context~\cite{may-tac}. As $U$ is strong closed if and only if the adjunction $L\dashv U$ is Hopf, the notion of a Wirthm\"uller context can be considered more generally for arbitrary symmetric monoidal categories. One can go even further, by  dropping the symmetry assumption, and considering separately left and right Wirthm\"uller contexts. 

In particular, any Hopf adjunction $L\dashv U$ between monoidal categories with $U$ having right adjoint $R$ determines a Wirthm\"uller context as above. In the quest of an (Wirthm\"uller) isomorphism between the left and the right adjoints of $U$, we start by considering the more general case of having only one (left Hopf) adjunction $L\dashv U$ and looking for the existence of the other adjunction $U\dashv R$, that is, for the right adjoint $R$. We show that under suitable exactness assumptions, the existence of an object $C\in \C$ such that $LC$ is right dual to $L\I$, implies that the functor $L(C \ot -)$ is right adjoint to $U$, and that the resulting adjunction $U\dashv L(C \ot -)$ is left coHopf. 
One should see the object $C$ as the categorified version of the space of integrals for a Hopf algebra, while the duality between $L\I$ and $LC$ is a consequence of the Fundamental Theorem of Hopf modules, applied to the dual of a finite-dimensional Hopf algebra. More details on the categorification of integrals/coinvariants, for Hopf monads on autonomous/monoidal categories, can be found in~\cite{blv:hmmc,bv:hm}. 

The case when $C$ is isomorphic to the unit object $\I$ receives then a special attention, as it exhibits $L\I$ as a Frobenius monoid. And under the exactness assumptions mentioned earlier, this makes $L$ not only both left and right adjoint to $U$, but also a Frobenius monoidal functor. 

\medskip

The corresponding results for Hopf monads are now easy to obtain: first, that any biHopf monad $T$ on a monoidal category, with right adjoint (comonad) denoted by $G$, induces a linearly distributive comonad $(G,T)$; next, that the existence of a dual pair of \emph{free $T$-algebras} $(T\I, TC)$ induces under suitable exactness assumptions a right adjoint Hopf comonad $G=T(C \ot -)$. One step further, $T\I$ being a Frobenius monoid in the monoidal category of $T$-algebras constrains $T$ to be a Frobenius monad, under the same assumptions mentioned above. In particular, $T$ inherits also a monoidal structure, such that this new monoidal structure on $T$ and its old comonoidal structure combine to make $T$ a Frobenius monoidal functor. 

\medskip

The paper is organised as follows: Section~\ref{sect:prelim} briefly reviews the notions of monoidal categories and functors, dual pairs and Frobenius monoids, and recalls the notion of linearly distributive functor from~\cite{cockett-seely:lf}, particularised to the case of monoidal categories. 

The next section shows that biHopf adjunctions produce linearly distributive adjunctions, and linearly distributive adjunctions induce biHopf adjunctions, provided the left adjoint is degenerate (its components coincide). Consequently, biHopf monads produce linearly distributive comonads and vice versa, again in presence of a similar degeneracy condition. 

Section~\ref{sec:main} analyses the {Wirthm\"uller} context associated to a (left) Hopf adjunction $L\dashv U$, yielding an explicit right adjoint for $U$ as a twisted version of $L$. In the particular case when $L\I$ is a Frobenius monoid, $L$ turns to be both left and right adjoint for $U$. As a bonus, the adjunction $U\dashv L$ results to be (left) coHopf, and $L$ a Frobenius monoidal functor. 

The last section converts the results obtained in Sections~\ref{sec:hopf adj lin} and~\ref{sec:main} to Hopf monads. 

A comment on terminology: we shall employ the name ``(co)monoid'' for an object in a monoidal category endowed with an (co)associative (co)multiplication and (co)unit, and reserve the name ``(co)algebra'' for the more general case of an Eilenberg-Moore (co)algebra for a (co)monad. There is however an exception, as the reader might had seen already, in what concerns ``Hopf algebra'', as this is much more familiar than ``Hopf monoid''.


\medskip


{\bf Acknowledgements}
The author is grateful to the referee, whose helpful insights have significantly improved the previous version of this paper.


\section{Preliminaries}\label{sect:prelim}


\subsection{Monoidal categories and functors}\label{ssect:mon cat funct}

We shall denote by $\ot$ the tensor product of any monoidal category encountered in this paper, while the unit object will be generically denoted by $\I$. We shall in the sequel omit the associativity and unit constraints, writing as the monoidal categories were strict~\cite{maclane}. The identity morphism will be always denoted by $1$, the carrier being obvious from the context. Also, to avoid overcharge in notations, we shall omit labeling natural transformations.

Many of our proofs rely on commutative diagrams. In order to increase their readability, we shall label diagrams by $(N)$ and $(M)$ if these commute by naturality, respectively by monoidal functoriality, as in $(f\ot 1)(1\ot g) = f\ot g = (1\ot g)(f\ot 1)$. Otherwise, we shall refer to previously labeled relations. 

If $\C$ is a monoidal category, the reversed tensor product determines another monoidal structure on $\C$, that we shall denote by $\C^\cop$. The opposite category of $\C$ also becomes monoidal, with either the original monoidal product, in which case we refer to it as $\C^\op$, or with the reversed monoidal product. We shall then use the notation $\C^{\op, \cop}$. All the above mentioned monoidal categories share the same unit object ${\I}$.  

\medskip

The notion of a monoidal functor between monoidal categories is well-known; a classical reference is~\cite{maclane}; the same goes for the notion of comonoidal functor, as well for monoidal and comonoidal natural transformations. When the structure morphisms are isomorphisms, we shall use the term strong monoidal functor. 

We recall from~\cite{day-pastro:Frob} 
 that a functor $F:\C \to \D$ between monoidal categories is called \emph{Frobenius monoidal} if it carries both a monoidal structure 
\[
f_2:FX\ot FY \to F(X \ot Y), \quad f_0: \I \to F\I 
\]
and a comonoidal structure 
\[
F_2: F(X\ot Y) \to FX \ot FY, \quad F_0: F\I\to \I
\]
subject to the compatibility conditions below: 
\begin{align}
\vcenter{
\xymatrix{
FX \ot F(Y \ot Z) \ar[d]_{f_2} \ar[r]^{1\ot F_2} & FX \ot FY \ot FZ \ar[d]^{f_2\ot 1}\\
F(X\ot Y\ot Z) \ar[r]^{F_2} & F(X\ot Y)\ot FZ
}}
\label{eq1:Frob}
\\
\vcenter{
\xymatrix{
F(X \ot Y) \ot FZ \ar[d]_{f_2} \ar[r]^{F_2\ot 1} & FX \ot FY \ot FZ \ar[d]^{1\ot f_2} \\F(X\ot Y\ot Z) \ar[r]^{F_2} & FX\ot F(Y\ot Z) 
}}
\label{eq2:Frob}
\end{align}
The simplest example of a Frobenius monoidal functor is a strong monoidal one, in which case the structural monoidal/comonoidal morphisms are inverses to each other. 


\subsection{On duality and Frobenius monoids in monoidal categories}\label{ssec:frob mon}

We quickly review below some basics on dual objects in monoidal categories; more details can be found in the references~\cite{kelly:many,kelly-laplaza:coherence,js:braided,freyd-yetter:coherence}. 

\medskip

A \emph{dual pair} in a monoidal category $\C$ consists of two objects $X,Y$, together with a pair of arrows $\e : X \ot Y \to \I$, $\db:{\I}\to Y\ot X$, called evaluation, respectively coevaluation, satisfying the relations 
\[
\vcenter{
\xymatrix@C=35pt@R=15pt{
X
\ar[r]^-{1\ot \db} 
\ar@<-.5ex>@{=}[dr] 
& 
X \ot Y \ot X 
\ar[d]^{\e\ot 1} 
&
Y
\ar[r]^-{\db \ot 1} 
\ar@<-.5ex>@{=}[dr] 
& 
Y \ot X \ot Y 
\ar[d]^{1\ot \e} 
\\ 
& 
X
&
& 
X}}
\]
Then $X$ is called a \emph{left dual} to $Y$, and $Y$ a \emph{right dual} to $X$. 
Each dual pair induces adjunctions $X \ot - \dashv Y \ot -$ and $-\ot Y \dashv -\ot X$. 

\medskip

The notion of a Frobenius monoid makes sense in any monoidal category; references on Frobenius monoids can be found in~\cite{abrams,fs:farmc,lawvere,street:frob}. We recall below only one of the equivalent characterizations, less familiar (dual of~\cite[Theorem~1.6.(b)]{street:frob}), but more suited for our purposes:

\begin{definition}\label{def:frob}
Let $(\C,\ot,{\I})$ be a monoidal category. A \emph{Frobenius monoid} in $\C$ is an object $A$ such that $(A, d:A \to A \ot A, e:A \to \I)$ is a comonoid, and there are morphisms $u:\I \to A$, $m:A \ot A \to A$ satisfying the Frobenius laws
\begin{equation}\label{eq:Frob_algebra} 
\xymatrix@R=8pt{ A\ot A \ar[rr]^{1\ot d} \ar[dd]_{d\ot 1} \ar[dr]^m && A\ot A \ot A \ar[dd]^{m\ot 1} \\ 
& A \ar[dr]^d & \\
A\ot A \ot A \ar[rr]^{1\ot m} & & A \ot A} 
\end{equation}
and the relations
\[
\xymatrix@C=33pt{
A 
\ar[r]^-{1 \ot u}
\ar@{=}[dr]
&
A \ot A
\ar[d]^{m}
&
A 
\ar[l]_-{u \ot 1}
\ar@{=}[dl]
\\
&
A
&
}
\] 
\end{definition}

Frobenius monoidal functors \emph{preserve} dual pairs and Frobenius monoids~\cite{day-pastro:Frob}; in particular, the unit object $\I$ of the domain category is always mapped to a Frobenius monoid. Notice also that if $A$ is a Frobenius monoid in a \emph{braided} monoidal category $\C$, then both functors $A \ot -,-\ot A:\C\to \C$ obtained by tensoring with $A$ are Frobenius monoidal~\cite{day-pastro:Frob}. 

\medskip

A \emph{Frobenius monad} on an arbitrary category $\C$ is a Frobenius monoid in the monoidal category of endofunctors $[\C,\C]$, with composition as monoidal product and identity functor as unit. 
Each monoid $A$ in a monoidal category $\C$ induces the monads $A\ot -$ and $ -\ot A$ on $\C$. Then $A$ is a Frobenius monoid if and only if $A \ot -$ is a Frobenius monad, equivalently, if and only if $-\ot A$ is a Frobenius monad. 

\medskip

A \emph{Frobenius functor} is a functor $U$ having left adjoint $F$ which is also right adjoint to $U$~\cite{cmz:Frob-book}. One also says that the adjunction is \emph{ambidextrous}. If $U^T$ denotes the forgetful functor from the category of Eilenberg-Moore algebras $\C^T$  for a monad $T$, then $T$ is a Frobenius monad if and only if $U^T$ is a Frobenius functor~\cite{street:frob}.


\subsection{Linearly distributive functors between monoidal categories} \label{sect:lin funct mon cat}

A pair of functors between monoidal categories, one of them monoidal and the other comonoidal, subject to several coherence conditions, has been called a linearly distributive functor~\cite{cockett-seely:lf},\footnote{Not to be confused with the notion of linear functor, namely an (enriched) functor between categories enriched over the category of $\Bbbk$-vector spaces, for a commutative field $\Bbbk$.} and it makes sense in a more general context than monoidal categories, namely linearly distributive categories.\footnote{These have been introduced by Cockett and Seely in \cite{cockett-seely:wdc:short} and \cite{cockett-seely:wdc} as to provide a categorical settings for linear logic.} Briefly, a linearly distributive category is a category $\C$ equipped with two monoidal structures $(\C,\ot,{\I})$ and $(\C,\pl,\mathbb 0)$, 
and two natural transformations $A\ot (B\pl C)\longrightarrow (A\ot B)\pl C$, $ 
(A\pl B)\ot C \longrightarrow A\pl(B\ot C)$, subject to several naturality coherence conditions that make the two monoidal structures work well together~\cite{cockett-seely:wdc}. Any monoidal category $(\C, \ot, {\I})$ is a (degenerate) linearly distributive category, with $\ot=\pl$ and ${\I}=\mathbb 0$. This is our case of interest, that we shall pursue in the sequel. 

\medskip

A \emph{linearly distributive functor} between monoidal categories $\C$ and $\D$ consists of a pair of functors $R, L : \C\to \D$, such that $(R,r_2 : R X \ot R Y \to R (X\ot Y),r_0 : \I \to R \I)$ is monoidal, $(L,L_2 : L(X \ot  Y) \to L X \ot L Y,  L_0 : L \I \to \I)$ is comonoidal,  
and there are four natural transformations, called strengths and costrengths
\begin{align*} 
&\nu_R^r:R (X\ot Y)\to L X\ot R Y,  \qquad \nu_R^l:R (X\ot Y)\to R X \ot L Y \\
&\nu_L^r:R X \ot L Y \to L (X\ot Y), \qquad \nu_L^l: L X \ot R Y \to L (X\ot Y) 
\end{align*} 
expressing how $R$ and $L$ (co)act on each other, subject to the several coherence conditions \cite{cockett-seely:lf} listed below. These are grouped in such way that a relation of each group is illustrated by a commutative diagram, from which the other relations belonging to the same group can be easily obtained, as follows: the passage $R/L$ corresponds to a move to $\C^\op$, while the passage $r/l$ is obtained for $\C^\cop$. Finally, both changes become simultaneously available in $\C^{\op, \cop}$. 

\begin{alignat}{2}
\tag{LF1}\label{lf1}&
\begin{cases} 
\nu^l_L \circ (1\ot r_0) = 1\\
\nu^r_L \circ (r_0 \ot 1) = 1 \\
(1\ot L_0) \circ \nu^l_R =1 \\
(L_0\ot 1) \circ \nu^r_R =1 \\
\end{cases} 
&
\vcenter{
\xymatrix@C=21pt@R=20pt{
LX 
\ar[d]_{1 \ot r_0} 
\ar@<+.3ex>@{=}[drr] 
&
&
\\
LX \ot R\I 
\ar[rr]_-{\nll} 
& 
&
LX 
&
}
}
\\
\tag{LF2}\label{lf2} & 
\begin{cases}
\nu^l_L \circ (\nu^l_L\ot 1) = \nu^l_L \circ (1\ot r_2 )  \\
\nu^r_L \circ (1\ot \nu^r_L) = \nu^r_L \circ (r_2 \ot 1) \\
(\nu^l_R\ot 1) \circ \nu^l_R = (1\ot L_2) \circ \nu^l_R \\
(1\ot \nu^r_R) \circ \nu^r_R = (L_2 \ot 1) \circ \nu^r_R 
\end{cases}
&
\vcenter{
\xymatrix@C=15pt@R=15pt{
LX\ot RY \ot RZ \ar[r]^{\nll\ot 1} \ar[d]_{1 \ot r_2} & L(X \ot Y)\ot RZ \ar[d]^{\nll} \\
LX\ot R(Y \ot Z) \ar[r]_{\nll} & L(X \ot Y \ot Z) 
}
} 
\\
\tag{LF3}\label{lf3} & 
\begin{cases}
\nrl \circ (1 \ot \nll) = \nll \circ (\nrl \ot 1) \\
(1\ot \nlr) \circ \nrr = (\nrr \ot 1) \circ \nlr 
\end{cases} 
&
\vcenter{\xymatrix@R=15pt{
RX\ot LY \ot RZ \ar[r]^{1 \ot \nll} \ar[d]_{\nrl \ot 1} & RX \ot L(Y\ot Z) \ar[d]^{\nrl} \\
L(X\ot Y) \ot RZ \ar[r]_{\nll} & LX \ot RY \ot LZ
}
}
\\
\tag{LF4}\label{lf4} &
\begin{cases}
(\nll \ot 1) \circ (1 \ot \nlr) = L_2 \circ \nll \\
(1 \ot \nrl) \circ (\nrr \ot 1) = L_2 \circ \nrl \\
(1 \ot \nll) \circ (\nlr\ot 1) = \nlr \circ r_2 \\
(\nrl \ot 1) \circ (1 \ot \nrr) = \nrr \circ r_2 
\end{cases}
&
\vcenter{
\xymatrix@R=15pt{
LX \ot R(Y \ot Z) \ar[r]^{1\ot \nlr} \ar[d]_{\nll} 
& 
LX \ot RY \ot LZ \ar[d]^{\nll \ot 1} 
\\
L(X \ot Y \ot Z) \ar[r]_{L_2} 
& 
L(X\ot Y) \ot LZ
} }
\\
\tag{LF5}\label{lf5}& 
\begin{cases}
(1 \ot \nll) \circ (L_2 \ot 1) =L_2 \circ \nll  \\
(\nrl \ot 1) \circ (1 \ot L_2) =L_2 \circ \nrl  \\
(r_2 \ot 1) \circ (1\ot \nlr) = \nlr \circ r_2 \\
(1 \ot r_2 ) \circ (\nrr\ot 1) = \nrr \circ r_2 
\end{cases}
&
\vcenter{
\xymatrix@R=15pt{ L(X\ot Y) \ot RZ \ar[r]^{L_2 \ot 1} \ar[d]_{\nll} & LX \ot LY \ot RZ \ar[d]^{1 \ot \nll} \\
L(X \ot Y \ot Z) \ar[r]_{L_2} & LX\ot L(Y \ot Z) 
}
}
\end{alignat}

Linearly distributive functors compose component-wise: given $(R,L):\C  \to \D$ and $(R',L'):\D \to \mathcal E$ linearly distributive functors between monoidal categories, the pair $(R'R, L'L)$ becomes again a linearly distributive functor, the corresponding natural transformations being given by 
\begin{equation*}
\xymatrix{R'R(X\ot Y) \ar[r]^{R'\nrr} & R'(LX \ot RY) \ar[r]^{\nu^r_{R'}} & L'LX \ot R'RY}
\end{equation*}
and similar three more formulas. 

Given linearly distributive functors $(R,L), (R',L'):\C \to \D$, a linearly distributive natural transformation $(R,L)\to (R',L')$~\cite{cockett-seely:lf} consists of a monoidal natural transformation $\rho:R \to R'$ and a comonoidal natural transformation $\lambda:L' \to L$ (notice the change of direction!) satisfying the following relations:
\begin{alignat}{2}\tag{LN}\label{ln}
& \begin{cases}
\lambda \circ \nll \circ (1 \ot \rho) = \nll \circ (\lambda \ot 1) \\
\lambda \circ \nrl \circ (\rho \ot 1) = \nrl \circ (1 \ot \lambda) \\
(1 \ot \lambda ) \circ \nlr \circ \rho = (\rho \ot 1) \circ \nlr \\
(\lambda \ot 1) \circ \nrr \circ \rho = (1 \ot \rho) \circ \nrr 
\end{cases}
&
\vcenter{\xymatrix@R=15pt@C=8pt{& L'X \ot RY \ar[dl]_{1 \ot \rho} \ar[dr]^{\lambda \ot 1} & \\
L'X \ot R'Y \ar[d]_{\nll} && LX \ot RY \ar[d]^{\nll} 
 \\
L'(X \ot Y) \ar[rr]^-\lambda && L(X \ot Y) }}
\end{alignat}

There are well-defined notions of vertical and horizontal compositions for linearly distributive natural transformations for which we refer again to \cite{cockett-seely:lf}, such that monoidal categories with linearly distributive functors and linearly distributive natural transformations organise into a 2-category, denoted $\mathbf{MonLinDist}$.

\begin{example}
Any strong monoidal functor $(U,u_2,u_0)$ between monoidal categories induces a linearly distributive functor by $R=L=U$, with (co)strengths given by $\nu_R^l=\nu_R^r=u_2^{-1}$, $\nu_L^l=\nu_L^r=u_2$. More generally, any Frobenius monoidal functor $(F, f_2, f_0, F_2, F_0)$ produces a linearly distributive functor $(R, L)$ with again equal components $R=L=F$, such that $\nrr=\nlr=L_2=F_2$ and $\nlr=\nll=r_2=f_2$ \cite{egger}. 
\end{example}


In order to motivate the development in Section~\ref{sec:main}, we recall some facts on how a linearly distributive functor $(R,L)$ acts on the unit object $\I$. As the latter is simultaneously a monoid and a comonoid,  it is mapped by the monoidal functor $R$ to the monoid $(R\I, r_2:R\I \ot R\I \to R\I, r_0: \I \to R\I)$ and by the comonoidal functor $L$ to the comonoid $(L\I, L_2: L\I \to L\I \ot L\I, L_0:L\I \to \I)$. 

Half of the relations~\eqref{lf1}-\eqref{lf2} ensure that $\nrr:R\I \to L\I \ot R\I$ defines a left $L\I$-coaction on $R\I$ and $\nlr:R\I \to R\I\ot L\I$ defines a right $L\I$-coaction on $R\I$, while~\eqref{lf3} says that $R\I$ becomes an $L\I$-bicomodule, such that by~\eqref{lf5} the multiplication $r_2:R\I \ot R\I \to R\I$ respects both the left and right $L\I$-coactions.   
Similarly, $L\I$ becomes a $R\I$-comodule using $\nrl$ and $\nll$ as actions, such that the comultiplication $L_2:L\I \to L\I \ot L\I$ is a morphism of $R\I$-bimodules. Then~\eqref{lf4} implies that the left/right $L\I$-coactions on $R\I$ are morphisms of left, respectively right $L\I$-comodules, and similarly for the $R\I$-actions on $L\I$. 

The object $L\I$ is both a left and right dual for $R\I$, with evaluation and coevaluation morphisms 
\vskip-1em
\[
\xymatrix@C=25pt@R=8pt{
\I \ar[r]^{r_0} & R\I \ar[r]^-{\nlr} & R\I \ot L\I, \quad L\I \ot R\I \ar[r]^-{\nll} & L\I \ar[r]^{L_0} & \I
\\
\I \ar[r]^-{r_0} & R\I \ar[r]^-{\nrr} & L\I \ot R\I, \quad R\I \ot L\I \ar[r]^-{\nrl} & L\I \ar[r]^{L_0} & \I
}
\]
Under these dualities, the comultiplication of the comonoid $L\I$ is transposed to the left and right $L\I$-coactions on $R\I$, which transposed again produce the multiplication on $R\I$.

Such a pair $(R\I, L\I)$ formed by a monoid and a comonoid (co)acting on each other was called a cyclic nuclear monoid in~\cite{egger}, or a linear monad in~\cite{cockett-koslowski-seely:lb}.


\section{Hopf adjunctions and linearly distributive functors}\label{sec:hopf adj lin}


\subsection{Comonoidal and Hopf adjunctions} \label{ssec: hopf adj}

Let $\C $ and $\D$ be monoidal categories. In the sequel, we shall denote objects of $\C$ by $X,Y, \ldots$ and the objects of $\D$ by $A, B, \ldots$ to make a clear distinction between them. 

Recall from~\cite{kellydoctrinal} that given an adjunction between $\C$ and $\D$, monoidal structures on the right adjoint are in one-to-one correspondence with comonoidal structures on the left adjoint, such that the unit and the counit of the adjunction become monoidal-comonoidal natural transformations. 

\medskip

We consider throughout that $(U:\D\to \C, u_2:UA \ot UB \to U(A \ot B), u_0:\I \to U\I)$ is a strong monoidal functor, having either a left adjoint $L$ or a right adjoint $R$, depending on the context.

Denote by $\eta^l:1\to UL$ and $\epsilon^l:LU\to 1$ the unit, respectively the counit of the adjunction $L\dashv U$. As mentioned above, $L$ becomes a comonoidal functor, with structure morphisms 
\begin{equation}
\label{eq:induced_colax0} 
\resizebox{.9\textwidth}{!}{%
$\begin{aligned}
& \xymatrix{L_0: L{\I} \ar[r]^-{Lu_0} & LU{\I} \ar[r]^{\epsilon^l} & {\I}}
\\  
& \xymatrix@C=25pt{L_2: L(X\ot Y) \ar[r]^-{L(\eta^l \ot \eta^l)} & L(ULX \ot ULY) \ar[r]^{Lu_{2}} & LU(LX\ot LY) \ar[r]^-{\epsilon^l} & LX \ot LY}
\end{aligned}$
}
\end{equation}
such that the unit and the counit satisfy the diagrams below: 
\begin{equation}
\label{unit-lax-colax}
\vcenter{\xymatrix@R=15pt{
X \ot Y \ar[d]_{\eta^l \ot \eta^l} \ar[r]^-{\eta^l} 
& 
UL(X\ot Y) \ar[d]^{UL_2} 
& 
{\I} \ar[r]^{\eta^l} \ar[dr]_{u_0} 
& 
UL{\I} \ar[d]^{UL_0} 
\\
ULX \ot ULY \ar[r]^{u_2} & U(LX \ot LY) 
&
& U{\I}
\\
L(UA \ot UB) \ar[d]_{Lu_2} \ar[r]^{L_2} & LUA\ot LUB \ar[d]^{\eps^l \ot \eps^l} 
&
L{\I}  \ar[d]_{Lu_0} \ar[dr]^{L_0} & 
\\
LU(A \ot B) \ar[r]^-{\epsilon^l} & A \ot B
&
LU{\I} \ar[r]^{\eps^l}  & {\I} 
}}
\end{equation}

\medskip

For the adjunction $L\dashv U$, the natural composites 
\begin{align} 
\label{left Hopf} 
& \xymatrix@C=20pt{
\hol: L(- \ot U(-)) \ar[r]^-{L_{2}} & L(-) \ot LU(-) \ar[r]^-{1\ot \epsilon^l} & L(-) \ot -
}
\\
\label{right Hopf} 
& \xymatrix@C=20pt{
\hor: L(U(-) \ot -) \ar[r]^-{L_{2}} & LU(-) \ot L(-) \ar[r]^-{\epsilon^l \ot 1} & - \ot L(-)
}
\end{align}
were called  in~\cite{blv:hmmc} the \emph{left Hopf operator}, respectively the \emph{right Hopf operator}. Following \emph{op.~cit.}, $L\dashv U$ is called a \emph{left Hopf adjunction} if $\hol$ is invertible.\footnote{In this case, one also say that \emph{projection formula} holds \cite{shulman}. Recall from~\cite{e-k:cc} that a functor between (left) \emph{closed} monoidal categories is monoidal if and only if it is (left) closed. If additionally it is strong monoidal and right adjoint, then it is strong (left) closed if and only if all (left) Hopf operators are invertible. } Similar terminology goes for the right-handed case; the adjunction $L\dashv U$ is called simply a Hopf adjunction if it is both left and right Hopf adjunction.

The next lemma collect some properties of Hopf operators needed in the sequel; for convenience, these are listed in groups, illustrating an example of each by a commutative diagram, and are easy to prove by standard diagram chasing left to the reader (see also~\cite[Proposition~2.6]{blv:hmmc}).

\begin{lemma} 
For a comonoidal adjunction $L\dashv U$, the Hopf operators satisfy the following relations:  
\begin{alignat}{3}
\label{h1l}&
\begin{cases}
\hol \circ L(1 \ot u_0) = 1 \\
\hor \circ L(u_0 \ot 1) = 1 
\end{cases}
&
\vcenter{\xymatrix@C=65pt@R=15pt{LX \ar@{=}@/_1.8ex/[dr] \ar[r]^-{L(1 \ot u_0)} & L(X \ot U\I) \ar[d]^{\hol} \\
 & LX }} 
&
\\
\label{h2l} & 
\begin{cases}
(\hol \ot 1) \circ \hol = \hol \circ L(1 \ot u_2) \\
(1 \ot \hor ) \circ \hor = \hor \circ L(u_2 \ot 1) 
\end{cases}
& 
\vcenter{\xymatrix@C=22pt@R=15pt{L(X \ot UA \ot UB) \ar[d]_{L(1 \ot u_2)}  \ar[r]^-{\hol} & L(X \ot UA) \ot B \ar[d]^{\hol \ot 1} \\
L(X \ot U(A \ot B)) \ar[r]_-{\hol} & LX \ot A \ot B}} 
&
\\
\label{h3l} &
(\hor \ot 1) \circ \hol = (1 \ot \hol) \circ \hor
& 
\vcenter{\xymatrix@R=15pt{L(UA \ot X \ot UB) \ar[r]^{\hol} \ar[d]_{\hor} & L(UA \ot X) \ot B \ar[d]^{\hor \ot 1} \\
A \ot L(X \ot UB) \ar[r]_{1 \ot \hol} & A \ot LX \ot B}} 
&
\\
\label{h4l} & 
\begin{cases}
\hol \circ L(1\ot \eta^l) =L_2 \\
\hor \circ L(\eta^l\ot 1) = L_2 
\end{cases}
&
\vcenter{\xymatrix@C=46pt@R=15pt{L(X \ot Y) 
\ar@/_1.8ex/[dr]_{L_2} \ar[r]^-{L(1\ot \eta^l )} & L(X \ot ULY) \ar[d]^{\hol} \\
& LX \ot LY }} 
&
\\
\label{h5l} & 
\begin{cases}
U\hol \circ \eta^l = u_2 \circ (\eta^l \ot 1) \\
U \hor \circ \eta^l = u_2 \circ (1 \ot \eta^l) 
\end{cases}
&
\vcenter{\xymatrix@C=47pt@R=15pt{X \ot UA  \ar[r]^-{\eta^l} \ar[d]_{\eta^l \ot 1} & UL(X\ot UA) \ar[d]^{U\hol} \\
ULX \ot UA \ar[r]_{u_2} & U(LX \ot A) }}
&
\\ 
\label{h6l} & \begin{cases} 
(L_2 \ot 1) \circ \hol = (1 \ot \hol ) \circ L_2 \\
(1 \ot L_2) \circ \hor = (\hor \ot 1) \circ L_2 
\end{cases}
&
\vcenter{\xymatrix@R=15pt{L(X \ot Y\ot ULZ ) \ar[r]^{\hol} \ar[d]_{L_2} & L(X \ot Y)\ot LZ  \ar[d]^{L_2 \ot 1} \\
LX \ot L(Y \ot ULZ) \ar[r]_-{1 \ot \hol } & LX \ot LY \ot LZ }}   
&
\\
\label{h8l} & 
\begin{cases}
(L_0\ot 1)\circ \hol = \eps^l \\
(1\ot L_0) \circ \hor = \eps^l 
\end{cases}
&
\vcenter{\xymatrix@C=75pt@R=15pt{LUA \ar[r]^-{\hol} \ar@/_1.8ex/[dr]_{\eps^l} & L\I \ot A  \ar[d]^{L_0 \ot 1} \\
& A}}
&
\end{alignat}
\end{lemma}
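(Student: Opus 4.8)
The plan is to reduce everything to the induced comonoidal structure of $L$: expand each Hopf operator by its definition \eqref{left Hopf}--\eqref{right Hopf}, replace $L_0,L_2$ by their formulas \eqref{eq:induced_colax0} only when necessary, and then manipulate the resulting composites using just four ingredients: naturality of $L_2$ and of $u_2$; the comonoidal coherence of $L$ (coassociativity $(L_2\ot 1)\circ L_2=(1\ot L_2)\circ L_2$ and the counit laws $(L_0\ot 1)\circ L_2=1=(1\ot L_0)\circ L_2$, which hold because $L\dashv U$ is a comonoidal adjunction); the two triangle identities $\eps^l L\circ L\eta^l=1$ and $U\eps^l\circ\eta^l U=1$; and the compatibility squares \eqref{unit-lax-colax} linking $\eta^l,\eps^l$ with $u_2,u_0$. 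Because each group of relations is stable under the passages $\C\rightsquigarrow\C^\cop$ and $\D\rightsquigarrow\D^\cop$ (which exchange $\hol\leftrightarrow\hor$ and the superscripts $r/l$), I would verify only one relation per group and obtain the partner formally.

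For the single-operator identities I expect a one-line chase each. In \eqref{h1l}, writing $\hol=(1\ot\eps^l)\circ L_2$ and sliding $L(1\ot u_0)$ through $L_2$ by naturality turns the composite into $(1\ot(\eps^l\circ Lu_0))\circ L_2=(1\ot L_0)\circ L_2$, which is the counit law. Relation \eqref{h4l} is the identical computation with $\eta^l$ in place of $u_0$: naturality of $L_2$ replaces $L(1\ot\eta^l)$ by $1\ot L\eta^l$, and the triangle identity $\eps^l L\circ L\eta^l=1$ collapses the result back to $L_2$. For \eqref{h8l} I set $X=\I$ and postcompose with $L_0\ot 1$, obtaining $(1\ot\eps^l)\circ(L_0\ot 1)\circ L_2=\eps^l$ by the left counit law.

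The ``associativity'' and ``commutativity'' relations \eqref{h2l}, \eqref{h3l}, \eqref{h6l} all follow from one principle: by the interchange law for $\ot$ and the coassociativity of $L_2$, both sides rewrite as a single iterated splitting $L(-\ot-\ot-)\to L(-)\ot L(-)\ot L(-)$ followed by the appropriate counit components on the relevant factors. For \eqref{h3l} and \eqref{h6l} the two sides become literally the same composite (namely $(\eps^l\ot 1\ot\eps^l)\circ(1\ot L_2)\circ L_2$, respectively $(1\ot 1\ot\eps^l)\circ(1\ot L_2)\circ L_2$), so nothing more is needed. Relation \eqref{h2l} is the one place where the strong monoidal structure of $U$ genuinely enters: after the same reduction, the left side carries $(\eps^l\ot\eps^l)\circ L_2$ on the last two factors while the right side carries $\eps^l\circ Lu_2$ there, and these agree exactly by the bottom-left square of \eqref{unit-lax-colax}. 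Finally \eqref{h5l}, the only identity living in $\C$, I would treat by applying $U$ to $\hol$, inserting $\eta^l$, using the top-left square of \eqref{unit-lax-colax} to rewrite $UL_2\circ\eta^l=u_2\circ(\eta^l\ot\eta^l)$, and then invoking naturality of $u_2$ together with the triangle identity $U\eps^l\circ\eta^l U=1$ to absorb one copy of $\eta^l$.

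The only real obstacle is bookkeeping: the reductions rely repeatedly on the interchange law to slide a counit component $\eps^l$ past a comonoidal splitting $L_2$ acting on a disjoint tensor factor, so one must track carefully which slot each $\eps^l$ and each $L_2$ occupies. Conceptually, however, the single non-formal input beyond comonoidal coherence is the compatibility \eqref{unit-lax-colax} between the comonoidal structure of $L$ and the monoidal structure of $U$, and it is required only for \eqref{h2l} (with its mirror) and for \eqref{h5l}; every remaining relation is a formal consequence of coassociativity, the counit laws, naturality, and the triangle identities.
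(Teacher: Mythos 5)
Your proposal is correct and is exactly the ``standard diagram chasing'' that the paper leaves to the reader: each identity reduces, after unfolding $\hol=(1\ot\eps^l)\circ L_2$ and $\hor=(\eps^l\ot 1)\circ L_2$, to coassociativity and counit laws for $L_2,L_0$, naturality, the triangle identities, and the compatibility squares~\eqref{unit-lax-colax} (the latter needed precisely for~\eqref{h2l} and~\eqref{h5l}, as you say). I verified the reductions you sketch for each group and they all close up as claimed.
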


Consider also the situation where $U$ has a right adjoint: $(U\dashv R, \eta^r:1\to RU, \eps^r:UR \to 1)$. This time $R$ inherits a monoidal structure $(r_0:\I \to R\I, r_2 :RX \ot RY \to R(X \ot Y))$, given by (the dual of)~\eqref{eq:induced_colax0}. The corresponding diagrams for the monoidality of the unit and the counit will be referred in the sequel as~\textsf{co}\eqref{unit-lax-colax}. 

The left and right coHopf operators associated to the adjunction $U\dashv R$ are
\begin{equation} 
\label{chor}
\vcenter{
\xymatrix@R=10pt{
\chol:  R(-) \ot - \ar[r]^-{1\ot \eta^r} & R(-) \ot RU(-) \ar[r]^{r_2} & R( - \ot U(-)) \\
\chor: - \ot R(-) \ar[r]^-{\eta^r\ot 1 } & RU(-) \ot R(-) \ar[r]^{r_2} & R(U(-) \ot - ) 
}}
\end{equation}

We call the adjunction $U \dashv R$ a \emph{left coHopf adjunction} if the left coHopf operator $\chol$ is invertible, a \emph{right coHopf adjunction} if the right coHopf operator $\chor$ is invertible, and simply a \emph{coHopf adjunction} if it is both left and right coHopf adjunction.\footnote{Recall from~\cite{cls:hmc} that a functor between (left) coclosed monoidal categories is comonoidal if and only if it is (left) coclosed. If additionally it is strong monoidal and left adjoint, then it is strong (left) coclosed if and only if all (left) coHopf operators are invertible.} We do not list anymore the straightforward duals of~\eqref{h1l}-\eqref{h8l}, but instead we make again the convention that whenever we have to refer to them in the sequel, they will appear with the prefix ``\textsf{co}''. 

Finally, say that a {\em biHopf adjunction} is a triple adjunction $L\dashv U \dashv R$ between monoidal categories, with $U$ strong monoidal, $L\dashv U$ a Hopf adjunction and $U \dashv R$ a coHopf adjunction.


\subsection{BiHopf adjunctions yield linearly distributive functors}\label{ssec:bihopf adj lin}

Linearly distributive functors, as explained earlier, can be seen as a more general version of Frobenius monoidal functors. In our quest for the latter, we show as a first step that biHopf adjunctions \emph{always} produce linearly distributive functors:

\begin{theorem}\label{bihopf is linear}
Consider a biHopf adjunction $L\dashv U\dashv R:\C \to \D$ between monoidal categories. Then:
\begin{enumerate}
\item The pair $(R, L)$ is a linearly distributive functor. 
\item \label{bihopf}There is an adjunction $(U,U) \dashv (R,L)$ in the 2-category $\mathbf{MonLinDist}$ of monoidal categories, linearly distributive functors and linearly distributive natural transformations, with unit $(\eta^r, \epsilon^l)$ and counit $(\eps^r, \eta^l)$. 
\end{enumerate}
\end{theorem}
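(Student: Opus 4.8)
The plan is to treat the two parts in order: first I would manufacture the four (co)strengths turning $(R,L)$ into a linearly distributive functor, then I would check that the given unit/counit data are linearly distributive natural transformations satisfying the triangle identities. The guiding idea throughout is that the (co)strengths of $(R,L)$ should be the adjoint transposes of the (co)monoidal constraint $u_2$ of the degenerate functor $(U,U)$, and that invertibility of the Hopf and coHopf operators is exactly what allows these transposes to be formed.

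Concretely, I would define the two costrengths from the \emph{inverses} of the Hopf operators (available because $L\dashv U$ is Hopf) together with the counit $\eps^r$, and the two strengths dually from the inverse coHopf operators (available because $U\dashv R$ is coHopf) together with the unit $\eta^l$:
\[
\nll = L(1\ot \eps^r)\circ(\hol)^{-1}, \qquad \nrl = L(\eps^r\ot 1)\circ(\hor)^{-1},
\]
\[
\nlr = (\chol)^{-1}\circ R(1\ot \eta^l), \qquad \nrr = (\chor)^{-1}\circ R(\eta^l\ot 1),
\]
with the evident types $\nll\colon LX\ot RY\to L(X\ot Y)$, $\nlr\colon R(X\ot Y)\to RX\ot LY$, and so on. Naturality is immediate. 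I would then verify the axioms \eqref{lf1}--\eqref{lf5}. Since within each group the four relations are interchanged by the passages $R/L$ and $r/l$, and my definitions are manifestly symmetric under these passages, it suffices to check one representative per group.

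For \eqref{lf1}, \eqref{lf2} and \eqref{lf5}, which involve only the costrengths together with $L_2,L_0,r_2,r_0$, the verification unfolds the definition of $\nll$ and reduces to the Hopf relations \eqref{h1l}, \eqref{h2l}, \eqref{h6l}, \eqref{h8l} and to the monoidality diagrams \textsf{co}\eqref{unit-lax-colax} of $\eps^r$; here \eqref{h4l} provides the crucial bridge $L_2=\hol\circ L(1\ot\eta^l)$ between the comonoidal structure and the operators. The genuinely mixed axioms are \eqref{lf3}, which needs the left/right compatibility \eqref{h3l} (and its coHopf dual), and \eqref{lf4}, e.g.\ $(\nll\ot 1)\circ(1\ot\nlr)=L_2\circ\nll$. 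In the latter the comultiplication $L_2$ on the right is produced from the interaction, via \eqref{h4l} and \eqref{h6l}, of the Hopf operator inside $\nll$ with the coHopf operator inside $\nlr$; unravelling both sides one must commute the coHopf operator past the Hopf operator and apply the triangle identities of both adjunctions. I expect this interchange between the two families of operators to be the main obstacle of the whole proof.

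For part~2, the pointwise (co)monoidality of the four transformations is automatic: by doctrinal adjunction~\cite{kellydoctrinal} the unit and counit of the comonoidal adjunction $L\dashv U$ are comonoidal, so $\eps^l$ and $\eta^l$ are comonoidal, and dually $\eta^r,\eps^r$ are monoidal for $U\dashv R$. It then remains to check that $(\eta^r,\eps^l)$ and $(\eps^r,\eta^l)$ obey the linear-naturality conditions~\eqref{ln}; expanding the composite (co)strengths of $(RU,LU)=(R,L)\circ(U,U)$ and $(UR,UL)=(U,U)\circ(R,L)$ by the component-wise composition formula, these reduce once more to the lemma, notably \eqref{h5l} and its coHopf dual relating $U\hol$, $\eta^l$ and $u_2$. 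Finally, the two triangle identities need no fresh computation: a linearly distributive natural transformation is a pair of a monoidal and a comonoidal component, and composition in $\mathbf{MonLinDist}$ is component-wise, so projecting the triangle identities onto the monoidal ($R$-)component recovers exactly the triangle identities of $U\dashv R$ with unit $\eta^r$ and counit $\eps^r$, while projecting onto the comonoidal ($L$-)component recovers those of $L\dashv U$ with unit $\eta^l$ and counit $\eps^l$ (the reversal of comonoidal components matching the reversal in the zig-zag). Both hold, which establishes the adjunction $(U,U)\dashv(R,L)$.
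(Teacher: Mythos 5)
Your proposal is correct and follows essentially the same route as the paper: the four (co)strengths you define via the inverse (co)Hopf operators composed with $\eps^r$ and $\eta^l$ coincide with the paper's formulas \eqref{(co)strengths for Hopf}, the verification of \eqref{lf1}--\eqref{lf5} is reduced to one representative per group by the same $\op/\cop$ symmetry and rests on the same Hopf-operator identities \eqref{h1l}--\eqref{h8l}, and part~2 is likewise handled by checking the linear-naturality conditions \eqref{ln} with the triangle identities dispatched component-wise. No gaps.
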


\begin{proof}

1. The four strengths and costrengths are defined as follows: 
\begin{equation}\label{(co)strengths for Hopf}
\vcenter{
\xymatrix@C=32pt@R=6pt{\nu_R^r:R(X\ot Y) \ar[r]^-{R(\eta^l \ot 1)} & R(ULX \ot Y) \ar[r]^-{\mathfrak H^{r,-1}} & LX \ot RY \\
\nu_R^l:R(X\ot Y) \ar[r]^-{R(1\ot \eta^l)} & R(X \ot ULY) \ar[r]^-{\mathfrak H^{l,-1}} & RX \ot LY \\
\nu_L^r:RX \ot LY \ar[r]^-{\mathbbm H^{r, -1}} & L(URX \ot Y) \ar[r]^-{L(\epsilon^r \ot 1)} & L(X \ot Y)  \\
\nu_L^l:LX \ot RY \ar[r]^-{{\hol}^{-1}} & L(X \ot URY) \ar[r]^-{L(1\ot \epsilon^r )} & L(X \ot Y) } }
\end{equation}
Now we shall check that these form a linearly distributive functor. For each group of relations it is enough to prove only one of them, the remaining  following by the passage to $\C^\op$, $\C^\mathsf{cop}$, or $\C^{\op, \mathsf{cop}}$. 

Proof of~\eqref{lf1}:
\[
\xymatrix@C=40pt@R=15pt{
LX 
\ar[dd]^{1 \ot \eta^r}
\ar[dr]^{{\hol}^{-1}}
\ar@{-}@<+.25ex>`r[dddrrr][dddrrr]
\ar@{-}@<-.25ex>`r[dddrrr][dddrrr]
&
&
&
\ar@{}[ddll]|{\eqref{h1l}}
\\
&
L(X \ot U\I)
\ar[d]_{L(1 \ot U\eta^r)}
\ar@<+.3ex>@{=}[dr]
\\
LX \ot RU\I 
\ar[d]^{1 \ot Ru_0^{-1}}
\ar@{}[r]|{(N)}
&
L(X \ot URU\I)
\ar[d]_{L(1\ot URu_0^{-1})}
\ar[r]^{L(1 \ot \eps^r U)}
&
L(X \ot U\I)
\ar[dr]^{L(1 \ot u_0^{-1})}
\ar@{}[dl]|-{(N)}
\\
LX \ot R\I 
\ar[r]_{{\hol}^{-1}}
\ar@{<-}`l[uuu]`[uuu]^{1 \ot r_0}[uuu]
\ar`d[rrr]`[rrr]_{\nll}[rrr]
&
L(X \ot UR\I)
\ar[rr]_{L(1\ot \eps^r)}
&
&
LX
}
\]
Proof of~\eqref{lf2}:
\[
\resizebox{\textwidth}{!}{%
\xymatrix@C=30pt@R=15pt{
LX \ot RY \ot RZ
\ar[dd]_{1 \ot r_2}
\ar@{=}
[r]&
LX \ot RY \ot RZ
\ar[r]^{{\hol}^{-1}\ot 1}
\ar[d]_{{\hol}^{-1}}
\ar@{}[dr]|{\eqref{h2l}}
&
L(X \ot URY) \ot RZ
\ar[r]^-{L(1 \ot \eps^r) \ot 1}
\ar[d]^{{\hol}^{-1}}
\ar@{}[dr]|{(N)}
&
L(X \ot Y) \ot RZ
\ar[d]_-{{\hol}^{-1}}
\ar@{<-}`u[ll]`[ll]_{\nll \ot 1}[ll]
\ar`r[dd]`[dd]^{\nll}[dd]
\\
&
L(X \ot U(RY \ot RZ))
\ar[r]^{L(1 \ot u_2)}
\ar[d]_{L(1 \ot Ur_2)}
&
L(X \ot URY \ot URZ)
\ar[r]^{L(1 \ot \eps^r\ot 1)}
&
L(X \ot Y \ot URZ)
\ar[d]_{L(1 \ot 1\ot \eps^r)}
\\
LX \ot R(Y \ot Z)
\ar[r]^{{\hol}^{-1}}
\ar@{}[uur]|{(N)}
\ar`d[rrr]`[rrr]_{\nll}[rrr]
&
L(X \ot UR(Y \ot Z))
\ar[rr]^{L(1 \ot \eps^r)}
\ar@{}[urr]|{\mathsf{co}\eqref{unit-lax-colax}}
&
&
L(X \ot Y\ot Z)
}}
\]
Proof of~\eqref{lf3}:
\[
\resizebox{\textwidth}{!}{%
\xymatrix@R=20pt@C=35pt{
RX \ot LY\ot RZ
\ar[r]^-{1 \ot {\hol}^{-1}}
\ar[d]_{{\hor}^{-1}\ot 1}
\ar@{}[dr]|{\eqref{h3l}}
&
RX \ot L(Y \ot URZ)
\ar[r]^-{1 \ot L(1 \ot \eps^r)}
\ar[d]^{{\hor}^{-1}}
\ar@{}[dr]|{(N)}
&
RX \ot L(Y \ot Z)
\ar[d]^{{\hor}^{-1}}
\ar@{<-}`u[ll]`[ll]_{1 \ot \nll}[ll]
\ar`r[dd]`[dd]^{\nrl}[dd]
\\
L(URX \ot Y) \ot RZ
\ar[r]^{{\hol}^{-1}}
\ar[d]_{L(\eps^r \ot 1) \ot 1}
\ar@{}[dr]|{(N)}
&
L(URX \ot Y \ot URZ)
\ar[r]^{L(1 \ot 1 \ot \eps^r)}
\ar[d]^{L(\eps^r \ot 1 \ot 1)}
&
L(URX \ot Y \ot Z)
\ar[d]^{L(\eps^r \ot 1 \ot 1)}
\\
L(X \ot Y) \ot RZ
\ar[r]^{{\hol}^{-1}}
\ar@{<-}`l[uu]`[uu]^{\nrl \ot 1}[uu]
&
L(X \ot Y \ot URZ)
\ar[r]^{L(1 \ot 1 \ot \eps^r)}
\ar@{}[ur]|{(M)}
&
L(X \ot Y \ot Z)
\ar@{<-}`d[ll]`[ll]^{\nll}[ll]
}}
\]
Proof of~\eqref{lf4}:
\begin{equation}\label{eq:proof of lf4}
\resizebox{.92\textwidth}{!}{$\vcenter{\xymatrix@C=37pt@R=15pt{
LX \ot R(Y \ot Z) 
\ar[r]^{{\hol}^{-1}}
\ar[d]_{1 \ot R(1 \ot \eta^l)}
\ar@<+.1ex>`l[dd]`[dd]_{1 \ot \nlr} [dd]
\ar@{}[dr]|{(N)}
&
L(X \ot UR(Y \ot Z))
\ar[d]^{L(1 \ot UR(1 \ot \eta^l))}
\ar[r]^-{L(1 \ot \eps^r)}
\ar@{}[dr]^(.6){(N)}
&
L(X \ot Y \ot Z)
\ar[d]^{L(1 \ot 1\ot \eta^l)}
\ar@{<-}`u[ll]`[ll]_{\nll}[ll]
\\
LX \ot R(Y \ot ULZ)
\ar[r]^{{\hol}^{-1}}
\ar[d]_{1 \ot {\chol}^{-1}}
\ar@{}[dr]|{(N)}
&
L(Z \ot UR(Y \ot ULX))
\ar[r]^-{L(1 \ot \eps^r)}
\ar[d]^{L(1 \ot U{\chol}^{-1})}
&
L(X \ot Y \ot ULZ)
\ar@{=}[dd]
\\
LX \ot RY \ot LZ
\ar[r]^{{\hol}^{-1}}
\ar[d]_{{\hol}^{-1} \ot 1}
\ar@<+.4ex>`l[dddrr]`[dddrr]_{\nll \ot 1}`[ddrr][ddrr]
\ar@{}[dr]|{\eqref{h2l}}
&
L(X \ot U(RY \ot LZ))
\ar[d]^{L(1 \ot u_2^{-1})}
\ar@{}[r]|(.65){\mathsf{co}\eqref{h5l}}
\ar@{}[r]|(1.2){\eqref{h4l}}
&
\\
L(X \ot URY) \ot LZ 
\ar[r]^{{\hol}^{-1}}
\ar@{-}@<-.25ex>`d[dr][dr]
\ar@{-}@<+.25ex>`d[dr][dr]
&
L(X \ot URY \ot ULZ)
\ar[r]^-{L(1 \ot \eps^r \ot 1)}
\ar[d]^{\hol}
\ar@{}[dr]^(.6){(N)}
&
L(X \ot Y \ot ULZ)
\ar[d]^{\hol}
\\
&
L(X \ot URY) \ot LZ 
\ar[r]^-{L(1 \ot \eps^r) \ot 1}
&
L(X \ot Y) \ot LZ
\ar@{<-}`r[uuuu]`[uuuu]_{L_2}[uuuu]
\\
&
&
}}$}
\end{equation}
Proof of~\eqref{lf5}:
\begin{equation}\label{eq:proof of lf5}
\vcenter{\xymatrix@R=20pt@C=55pt{
L(X \ot Y) \ot RZ
\ar[r]^{L_2 \ot 1}
\ar[d]_{{\hol}^{-1}}
&
LX \ot LY \ot RZ 
\ar[d]^{1 \ot {\hol}^{-1}}
\\
L(X \ot Y \ot URZ)
\ar[r]^{L_2}
\ar[d]|{L(1 \ot 1 \ot \eps^r)}
\ar@{}[dr]|{(N)}
\ar@{}[ur]|{\eqref{h6l}}
&
LX \ot L(Y \ot URZ)
\ar[d]|{1 \ot L(1 \ot \eps^r)}
\\
L(X \ot Y \ot Z) 
\ar[r]^{L_2}
\ar@{<-}`l[uu]`[uu]^{\nll}[uu]
&
LX \ot L(Y \ot Z) 
\ar`r[uu]`[uu]_{1 \ot \nll}[uu]
}}
\end{equation}

2. The only thing to verify is that the unit $(\eta^r, \eps^l)$ and the counit $(\eps^r, \eta^l)$ are indeed linearly distributive natural transformations. That is, they should satisfy the following relations obtained from~\eqref{ln}: 
\begin{align}
& \label{lin-unit}
(\eta^r ,\eps^l) \text{ linear: } \begin{cases}
(\eps^l \ot 1) \circ \nrr \circ Ru^{-1}_2 \circ \eta^r = 1\ot \eta^r \\
(1\ot \eps^l ) \circ \nlr \circ Ru^{-1}_2 \circ \eta^r = \eta^r \ot 1 \\
\eps^l \circ Lu_2 \circ \nrl \circ (\eta^r \ot 1 ) = 1 \ot \eps^l \\
\eps^l \circ Lu_2 \circ \nll \circ (1 \ot \eta^r) = \eps^l \ot 1
\end{cases} 
\\
& \label{lin-counit}
(\eps^r, \eta^l) \text{ linear: } \begin{cases}
(\eta^l \ot 1) \circ \eps^r = (1 \ot \eps^r)\circ u_2^{-1} \circ U\nrr \\
(1 \ot \eta^l ) \circ \eps^r = (\eps^r\ot 1 )\circ u_2^{-1} \circ U\nlr \\
U \nrl \circ u_2 \circ (1 \ot \eta^l) = \eta^l \circ (\eps^r \ot 1) \\
U \nll \circ u_2 \circ (\eta^l \ot 1) = \eta^l \circ (1 \ot \eps^r ) 
\end{cases}
\end{align}
It is enough to check only one of each couple of four relations, the rest following by passage either to $\C^\op$, $\C^\cop$ or to $\C^{\op,\cop}$. For example, the first relation in~\eqref{lin-unit} is proved in the diagram below:
\[
\xymatrix@C=40pt@R=15pt{
A\ot B 
\ar@{}[drr]|{\mathsf{co}\eqref{unit-lax-colax}}
\ar[d]_{1\ot \eta^r}  
\ar[rr]^-{\eta^r} 
& 
& 
RU(A\ot B) 
\ar[d]^{Ru_2^{-1}}  
\\
A \ot RUB 
\ar@/_7ex/[ddrr]^{\chor} 
\ar[r]^-{\eta^r \ot 1} 
&
RUA \ot RUB 
\ar@{}[dd]|(.4){\eqref{chor}} 
\ar[r]^{r_2} 
& 
R(UA \ot UB)  
\ar@<+.8ex>@{-}`l[dd]`[dd][dd] 
\ar@<+.3ex>@{-}`l[dd]`[dd][dd]  
\ar[d]^{R(\eta^l U \ot 1)}  
\\
&  
& 
R(ULUA\ot UB) 
\ar[dr]^{{\chor}^{-1} } 
\ar[d]^{R(U\eps^l \ot 1)} 
&
\\
& 
&
R(UA \ot UB) 
\ar@{}[r]|{(N)} 
\ar[dr]^{{\chor}^{-1}} 
& 
LUA \ot RUB 
\ar@{<-}`u[uul]_{\nrr}[uul] 
\ar[d]^{\epsilon^l \ot 1} 
\\
&
&
&
A \ot RUB
\ar@<-1.25pt>@{-}`l[uuulll][uuulll] 
\ar@<+1.25pt>@{-}`l[uuulll][uuulll] 
}
\]
while the proof of the first relation in~\eqref{lin-counit} is given below:
\[
\xymatrix@R=7pt{
UR(X \ot Y) 
\ar[ddd]_{UR(\eta^l \ot 1)}
\ar[rrr]^{\eps^r}
\ar@{}[ddrrr]|{(N)}
&
&
&
X \ot Y
\ar[ddd]^{\eta^l \ot 1}
\\
&
&
&
\\
&
&
&
\\
UR(ULX \ot Y) 
\ar[r]_{U{\chor}^{-1}}
\ar@<-.8ex>`u[urrr]`[rrr]^{\eps^r}[rrr]
&
U(LX \ot RY)
\ar[r]_{u_2^{-1}}
\ar@{}[ur]|{\mathsf{co}\eqref{h5l}}
&
ULX \ot URY 
\ar[r]_{1 \ot \eps^r}
&
ULX \ot Y
\\
}
\]
\end{proof}

\begin{remark}
Consider a linearly distributive adjunction $(U,V)\dashv (R,L)$ between monoidal categories, with unit $(\eta^r:1\to RU, \eps^l:LV \to 1)$ and counit $(\eps^r:UR \to 1, \eta^l : 1 \to VL)$. In particular, $(U\dashv R,\eta^r,\eps^r)$ is a monoidal adjunction and $(L\dashv V,\eta^l, \eps^l)$ a comonoidal adjunction; consequently, both $U$ and $V$ are strong monoidal functors.  

\noindent Assume that $U=V$; then the linearly distributive adjunction $(U,U)\dashv (R,L)$ induces a biHopf triple adjunction $L\dashv U\dashv R$. More precisely, the formulas below provide inverses for the Hopf and coHopf operators:
\[
\xymatrix@C=40pt@R=4pt{
{\hol}^{-1}: LX \ot A 
\ar[r]^-{1 \ot \eta^r} 
& 
LX \ot RUA 
\ar[r]^{\nu_L^l}
&
L(X \ot UA)
\\
\mathbbm H^{r, -1}: A \ot LX 
\ar[r]^-{\eta^r \ot 1} 
& 
RUA \ot LX
\ar[r]^{\nrl}
&
L(UA \ot X)
\\
\quad \mathfrak H^{l,-1}: R(X \ot UA)
\ar[r]^-{\nlr}
&
RX \ot LUA 
\ar[r]^{1 \ot \eps^l}
&
RX \ot A
\\
\quad \mathfrak H^{r,-1}: R(UA \ot X)
\ar[r]^-{\nrr}
&
LUA \ot RX 
\ar[r]^{\eps^l \ot 1}
&
A \ot RX
}
\]
We provide below the computations for $\hol$ and ${\hol}^{-1}$:

\[
\resizebox{\textwidth}{!}{
\xymatrix@R=25pt{
LX \ot A 
\ar[r]^-{1 \ot \eta^r}
\ar@/_2.85ex/[dddr]|{1 \ot \eta^r \ot 1}
\ar[ddd]|-{1 \ot r_0 \ot 1}
\ar@{}[dr]|{\eqref{lin-unit}}
&
LX \ot RUA
\ar[r]^\nll
\ar[d]|{1 \ot R(u_0\ot 1)} 
&
L(X \ot UA)
\ar[d]|{L(1 \ot u_0)\ot 1}
\ar@{}[dl]|{(N)}
\ar@<.25ex>@{-}`r[dr][dr]
\ar@<-.25ex>@{-}`r[dr][dr]
\ar@{<-}`u[ll]`[ll]_{{\hol}^{-1}}[ll]
\\
&
LX \ot R(U\I \ot UA) 
\ar[d]^{1 \ot \nlr}
\ar[r]^{\nll}
\ar@{}[dr]|{\eqref{lf4}}
&
L(X \ot U\I \ot UA) 
\ar[d]^{L_2}
\ar[r]^-{L(1 \ot u_0^{-1} \ot 1)}
&
L(X \ot UA)
\ar[d]^{L_2}
\ar`r[ddd]`[ddd]_{\hol}[ddd]
\\
&
LX \ot RU\I \ot LUA
\ar[d]^{1 \ot 1\ot \eps^l}
\ar[r]^{\nll \ot 1}
\ar@{}[l]|(1.15){\eqref{lf1}}
\ar@{}[dr]|{(M)}
&
L(X\ot U\I) \ot LUA \quad 
\ar[d]^{1 \ot \eps^l}
&
LX \ot LUA
\ar[dd]^{1 \ot \eps^l}
\\
LX \ot R\I \ot A
\ar[r]^-{1 \ot Ru_0 \ot 1}
\ar@{}[ur]|{\mathsf{co}\eqref{unit-lax-colax}}
\ar[d]|{\nll \ot 1}
&
LX \ot RU\I \ot A
\ar[r]^{\nll \ot 1}
\ar@{}[dr]|{(N)}
&
L(X \ot U\I) \ot A
\ar@/_2ex/[dr]^-{\ L(1\ot u_0^{-1})\ot 1}
\ar@{}[uur]|{(N)}
&
\\
LX \ot A
\ar@<-.25ex>@{-}`l[uuuu]`[uuuu][uuuu]
\ar@<+.25ex>@{-}`l[uuuu]`[uuuu][uuuu]
\ar@<-.25ex>@{=}[rrr]
&
&
&
LX \ot A
&
}}
\]
\[
\resizebox{\textwidth}{!}{
\xymatrix@R=20pt{ %
L(X \ot UA) 
\ar[d]|-{L(\eta^l \ot 1)}
\ar[rr]^{L(1 \ot U\eta^r)}
\ar@<+.25ex>`l[ddddd]`[dddddr]`[ddddr]_{\hol}[ddddr]
\ar@<-.25ex>@{-}`u[rrr]`[rrr][rrr]
\ar@<+.25ex>@{-}`u[rrr]`[rrr][rrr]
&
\ar@{}[ddr]|{(M)}
&
L(X \ot URUA)
\ar[dd]^{L(\eta^l \ot 1)}
\ar[r]^{L(1 \ot \eps^r U)}
\ar@{}[dddr]|{\eqref{lin-counit}}
& 
L(X \ot UA) 
\ar[ddd]^{L\eta^l}
\ar@<-.25ex>@{-}`r[dddd]`[dddd][dddd]
\ar@<+.25ex>@{-}`r[dddd]`[dddd][dddd]
\\
L(ULX \ot UA)
\ar[d]|-{L(1 \ot \eta^l U)}
\ar@<-.25ex>@{-}`r[dr][dr]
\ar@<+.25ex>@{-}`r[dr][dr]
&
&
\\
L(ULX \ot ULUA)
\ar[d]_{Lu_2}
\ar[r]^{L(1 \ot U\eps^l)}
\ar@{}[dr]|{(N)}
& 
L(ULX \ot UA)
\ar[d]^{Lu_2}
\ar[r]^{L(1 \ot U\eta^r)}
\ar@{}[dr]|{(N)}
&
L(ULX \ot URUA)
\ar[d]^{Lu_2}
\\
LU(LX \ot LUA)
\ar[d]_-{\eps^l}
\ar[r]^{LU(1 \ot \eps^l)}
\ar@{}[dr]|{(N)}
&
LU(LX \ot A)
\ar[d]^{\eps^l}
\ar[r]^{LU(1\ot \eta^r)}
\ar@{}[dr]|{(N)}
&
LU(LX \ot RUA)
\ar[d]^{\eps^l}
\ar[r]^{LU\nll}
\ar@{}[dr]|{(N)}
& 
LUL(X \ot UA)
\ar[d]^{\eps^l L}
\\
LX \ot LUA
\ar[r]^-{1 \ot \eps^l}
\ar@{}[dr]|{\eqref{eq:induced_colax0},\eqref{left Hopf}}
&
LX \ot A
\ar[r]^-{1 \ot \eta^r} 
\ar@<+.5ex>`d[drr]`[urr]_{{\mathbbm H}^{l,-1}}[rr]
&
LX \ot RUA
\ar[r]^-{\nll}
&
L(X \ot UA)
\\
&
&
&
&
}}
\]
The invertibility of the other (co)Hopf operators can be similarly checked. 
\end{remark}

\noindent In view of the previous remark, it seems natural to ask the following question: 
For a linearly distributive adjunction $(U,V)\dashv (R,L)$ between monoidal categories, is it always the case that not only $U$ and $V$ are strong monoidal, but also that $(U,V)$ is a degenerate linearly distributive functor, in the sense that $U$ and $V$ are isomorphic as (strong) monoidal functors (in the spirit of~\cite{kellydoctrinal})?
Notice that~\cite[Proposition~15]{balan:frob} gives a positive answer, under the additional assumption that the categories involved are autonomous.


\section{Hopf adjunctions and Frobenius-type properties}\label{sec:main}

In this section we consider a strong monoidal functor $U$ between monoidal categories having left adjoint $L$, such that the resulting adjunction $L\dashv U$ is (left) Hopf.  

We shall provide conditions for $U$ to have a right adjoint $R$. Such a triple adjunction $L\dashv U\dashv R$ is an example of a (left) \emph{Wirthm\"uller context}~\cite{may-tac}.%
\footnote{
More precisely, a Wirthm\"uller context has only been considered in the situation where the categories involved were (symmetric) monoidal closed. But the closed structure appears only in the requirement that $U$ to be strong closed, which as said earlier, can be substituted by the more convenient formulation of Hopf adjunction. 
} %
Additionally, we shall see that $U\dashv R$ is a (left) coHopf adjunction. 

Next, we shall look on when the left and right adjoints of are isomorphic; that is, when $U$ is a Frobenius functor. As a bonus, we shall obtain that $L$ is also a Frobenius monoidal functor. 


\subsection{The Wirthm\"uller isomorphism.}\label{ssec:wirth}

Let $L \dashv U$ be a left Hopf adjunction. 
Motivated by the theory of (finite dimensional) Hopf algebras, but also by the development in~\cite{may-tac}, we shall assume the existence of an object $C$ of $\C$, endowed with a pair of morphisms $u:\I \to LC$, $m: L\I \ot LC \to L\I$, subject to the following relations, labeled for later use: 
\begin{equation}\label{LC}
\vcenter{
\xymatrix@C=11.515pt@R=15pt{
L\I \ot LC 
\ar[rr]^-{L_2 \ot 1} 
\ar[dddd]_-{1 \ot L_2} 
\ar[ddr]^{ m}
&
&
L\I \ot L\I \ot LC 
\ar[dddd]^{1 \ot  m}
&
L\I 
\ar[rr]^-{1 \ot u}
\ar@<-0.25ex>@{-}`d[drr][drr]
\ar@<0.25ex>@{-}`d[drr][drr]
\ar@{}[drr]|{\mathrm{(iii)}}
&
&
L\I \ot LC 
\ar[d]^{ m}
\\
&
&
&
&
&
L\I
\\
&
L\I
\ar[ddr]^{L_2}
\ar@{}[uur]|-{\mathrm{(i)}}
\ar@{}[ddl]|-{\mathrm{(ii)}}
&
&
LC
\ar[r]^-{u \ot 1}
\ar@<-0.25ex>@{-}`d[ddr][ddr]
\ar@<0.25ex>@{-}`d[ddr][ddr]
\ar@{}[ddrr]|{\mathrm{(iv)}}
& 
LC \ot LC 
\ar[r]^-{L_2 \ot 1}
& 
LC \ot L\I \ot LC
\ar[dd]^{1 \ot  m}
\\
&&&&&
\\
L\I \ot LC \ot L\I
\ar[rr]^{ m \ot 1}
&
&
L\I \ot L\I
&
&
LC 
\ar[r]^{L_2}
&
LC \ot L\I
}}
\end{equation}
Then $LC$ becomes a right dual for $L\I$,%
\footnote{%
Actually, one can easily check that there is a one-to-one correspondence between morphisms $u, m$ satisfying~\eqref{LC}, and the structure of a right dual for $L\I$ on $LC$, such that $L_2:LC \to LC \ot L\I$ is the two-way transpose of $L_2:L\I \to L\I \ot L\I$. Notice also that $u:\I \to LC$ is the transpose of $L_0:L\I \to \I$ under the above duality. 
} %
with evaluation and coevaluation morphisms given by \newline
\begin{equation}\label{LC-right-dual-LI}
\begin{aligned}
&
\db:%
\xymatrix@C=35pt{
\I \ar[r]^u 
& 
LC \ar[r]^-{L_2} 
& 
LC \ot L\I 
}
\\[-2\jot]
&
\e:%
\xymatrix@C=35pt{
L\I \ot LC 
\ar[r]^-{ m}
&
L\I 
\ar[r]^{L_0}
& 
\I  
}
\end{aligned}
\end{equation}
The next diagrams show that the usual triangle equations are satisfied.
\[
\resizebox{\textwidth}{!}{%
\xymatrix@R=25pt{
L\I 
\ar[r]^-{1 \ot u} 
\ar@<-0.25ex>@{-}`d[dr][dr]
\ar@<0.25ex>@{-}`d[dr][dr]
& 
L\I \ot LC 
\ar[r]^-{1 \ot L_2} 
\ar[d]^{ m}
\ar@{}[dl]|{\hyperref[LC]{\eqref{LC}\text{-(iii)}}}
& 
L\I \ot LC \ot L\I
\ar[d]^{ m \ot 1} 
\ar@{}[dl]|{\hyperref[LC]{\eqref{LC}\text{-(ii)}}}
\ar@{<-}`u[ll]`[ll]_{1 \ot \db}[ll]
\ar`r[dd]`[dd]^{\e \ot 1}[dd]
\\
&
L\I 
\ar[r]^-{L_2}
\ar@<-0.25ex>@{-}`d[dr][dr]
\ar@<0.25ex>@{-}`d[dr][dr]
&
L\I \ot L\I
\ar[d]^{L_0 \ot 1}
\\
&
&
L\I 
} 
\xymatrix@R=15pt{
LC
\ar[r]^-{u \ot 1} 
\ar@<-0.25ex>@{-}`d[dr][dr]
\ar@<0.25ex>@{-}`d[dr][dr]
\ar@{}[drr]|{\hyperref[LC]{\eqref{LC}\text{-(iv)}}}
& 
LC \ot L\I 
\ar[r]^-{L_2 \ot 1} 
& 
LC \ot L\I \ot LC
\ar[d]^{1 \ot  m} 
\ar@{<-}`u[ll]`[ll]_{\db\ot 1}[ll]
\ar`r[dd]`[dd]^{1 \ot \e}[dd]
\\
& 
LC
\ar[r]^-{L_2}
\ar@<-0.25ex>@{-}`d[dr][dr]
\ar@<0.25ex>@{-}`d[dr][dr]
&
LC \ot L\I
\ar[d]^{1 \ot L_0}
\\
&
&
LC 
}}
\]

\begin{theorem}\label{frob1}

Let $ L\dashv U:\D \to \C $ be a left Hopf adjunction between monoidal categories. Assume the following:
\begin{enumerate}
\item There is an object $C$ of $\C$ and morphisms $u:\I \to LC$, $ m:L\I \ot LC \to L\I$ satisfying~\eqref{LC}.
\item $L$ is precomonadic.
\item For all $X$, $L(1 \ot \eta^l ): L(C\ot X) \to L(C \ot ULX)$ is a monomorphism.%
\footnote{Using~\eqref{h4l}, this is equivalent with $L_2:L(C \ot X) \to LC \ot LX$ being a monomorphism.}\label{cond2}
\end{enumerate} 
Then the functor $L(C \ot -)$ is right adjoint to $U$. 

\end{theorem}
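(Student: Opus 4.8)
The plan is to establish the adjunction $U\dashv L(C\ot -)$ by exhibiting a bijection
$$\Theta_{A,X}\colon \C(UA,X)\xrightarrow{\ \cong\ }\D\bigl(A,L(C\ot X)\bigr),$$
natural in $A\in\D$ and $X\in\C$. The unit of the prospective adjunction should be
$\eta_A\colon A\xrightarrow{u\ot 1}LC\ot A\xrightarrow{{\hol}^{-1}}L(C\ot UA)$,
built from the ``integral'' $u\colon\I\to LC$ and the inverse of the left Hopf operator $\hol_{C,A}\colon L(C\ot UA)\xrightarrow{\cong}LC\ot A$ (invertible since $L\dashv U$ is left Hopf). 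Accordingly I would set $\Theta_{A,X}(f)=L(1_C\ot f)\circ\eta_A$. Note that $u$ is precisely the coevaluation component of the duality~\eqref{LC-right-dual-LI} exhibiting $LC$ as a right dual of $L\I$, so $\eta_A$ is ``half'' of that duality transported through ${\hol}^{-1}$.

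For the inverse I would use the evaluation $\e=L_0\circ m\colon L\I\ot LC\to\I$ of the same dual pair, together with the comonoidal map $L_2$. Given $h\colon A\to L(C\ot X)$, form
$$g_h\colon LUA\xrightarrow{\ \hol_{\I,A}\ }L\I\ot A\xrightarrow{1\ot L_2h}L\I\ot LC\ot LX\xrightarrow{\e\ot 1}LX,$$
using the special instance $\hol_{\I,A}\colon LUA\xrightarrow{\cong}L\I\ot A$. The crucial claim is that $g_h$ is a morphism of $LU$-coalgebras from $\bigl(LUA,L\eta^l_{UA}\bigr)$ to $\bigl(LX,L\eta^l_X\bigr)$, that is, a morphism in $\D^{LU}$ between the comparison images $K(UA)$ and $KX$ of $K\colon\C\to\D^{LU}$. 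Since $L$ is precomonadic, $K$ is fully faithful, so such a coalgebra morphism equals $Lf_h$ for a unique $f_h\colon UA\to X$; I would then set $\Theta^{-1}_{A,X}(h)=f_h$. (Equivalently, one may use the weaker consequence that $\eta^l_X$ is the equalizer of $\eta^l_{ULX}$ and $UL\eta^l_X$, and factor the transpose of $g_h$ through it.)

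Having these two assignments, the remaining work is to verify $\Theta^{-1}\Theta=\mathrm{id}$, $\Theta\,\Theta^{-1}=\mathrm{id}$ and naturality, by diagram chasing. The round trips rely on the triangle equations for the dual pair $(L\I,LC)$ (already verified before the statement), on relation~\eqref{h4l}, which identifies $L_2$ with $\hol\circ L(1\ot\eta^l)$ and thereby reconciles $L_2$ with the Hopf operator, and on naturality of $\hol$. Faithfulness of $L$ (from precomonadicity) together with hypothesis~(\ref{cond2}), namely that $L_2\colon L(C\ot X)\to LC\ot LX$ is monic, supplies the injectivity and uniqueness needed to pin down $f_h$ and to separate morphisms.

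The main obstacle is the crucial claim: showing $g_h$ is a coalgebra morphism for \emph{every} $h$, not merely for those in the image of $\Theta$. This is where the Frobenius-type compatibility~\eqref{LC} between the multiplication $m$ and the comultiplication $L_2$ of $L\I$ enters decisively: relation (i) of~\eqref{LC} encodes the associativity/Frobenius law making $\e$ compatible with $L_2$, while the unit relations (iii), (iv) of~\eqref{LC} control the interaction with $u$. I expect the coalgebra identity $L\eta^l_X\circ g_h=LUg_h\circ L\eta^l_{UA}$ to reduce, after transporting through the invertible operators $\hol$ and peeling off $L_2$ (legitimate by hypothesis~(\ref{cond2})), to exactly the bicomodule/Frobenius compatibility of $(L\I,LC)$ recorded in~\eqref{LC}. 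Once this is secured, precomonadicity does the rest, and one reads off that the resulting adjunction $U\dashv L(C\ot -)$ has unit $\eta$.
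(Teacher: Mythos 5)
Your proposal is correct in substance and lands on the same adjunction with the same unit $\eta_A={\hol}^{-1}\circ(u\ot 1)$ as the paper's~\eqref{eq:eta-r}, but it is packaged differently. The paper factors $LU$ through $U$, invokes the dual of Dubuc's Adjoint Triangle Theorem~\ref{prop:dubuc-adj-tri-thm} to identify the putative right adjoint of $U$ as the equaliser of $L(1\ot UL\eta^l)$ and $L(1\ot \eta^l UL)$, and then shows that $L(C\ot -)$ preserves the precomonadicity equaliser~\eqref{corefl eq} --- the step where hypothesis~(3) is consumed. You instead inline the proof of that theorem, building the hom-set bijection $\C(UA,X)\cong\D(A,L(C\ot X))$ directly and using full faithfulness of the comparison functor into $\D^{LU}$ in place of the equaliser formulation; this buys a self-contained argument at the cost of absorbing essentially the same diagram chases into the two round-trip verifications. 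Two remarks on where the work actually lands. First, your ``crucial claim'' that $g_h$ is a coalgebra morphism for every $h$ is best seen not as a direct consequence of~\eqref{LC}, but by observing via~\eqref{h4l} that $g_h$ is the transpose of $L(1\ot\eta^l)\circ h$ under the composite adjunction $LU\dashv L(C\ot U(-))$, and that the coalgebra condition on a map $LUA\to LX$ transposes to the condition of equalising $L(1\ot UL\eta^l)$ and $L(1\ot\eta^l UL)$ --- a condition that $L(1\ot\eta^l)\circ h$ satisfies automatically because $\eta^l$ equalises $UL\eta^l$ and $\eta^l UL$. The relations~\eqref{LC} are spent earlier, in establishing the dual pair $(L\I,LC)$ (hence the adjunction) and in the computation identifying that transposed condition, which is the paper's first large diagram. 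Second, hypothesis~(3) is not what pins down $f_h$ (full faithfulness of the comparison functor does that); it is needed to prove $\Theta(\Theta^{-1}(h))=h$, i.e.\ to cancel the monomorphism $L(1\ot\eta^l)$ from the identity $L(1\ot\eta^l)\circ L(1\ot f_h)\circ\eta_A=L(1\ot\eta^l)\circ h$. Neither point is a gap, only a redistribution of where the hypotheses do their work.
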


\begin{proof}

Using that $L\dashv U$ is left Hopf, observe that the dual pair $(L\I, LC)$ induces the adjunction 
\begin{equation}\label{eq:adj}
\xymatrix@C=15pt{
LU
\ar[r]^-{\hol}_-{\cong}
& 
L\I \ot - 
\ar@{}[r]|{\mathlarger{\mathlarger{\mathlarger{\dashv}}}}
& 
LC \ot - \
\ar[r]^{{\hol}^{-1}}_-{\cong}
&
\ L(C \ot U(-))
}
\end{equation}
We have therefore the situation
\[
\xymatrix@C=60pt{
\mathcal D 
\ar[r]^U
\ar@<.6ex>[dr]^{LU}
\ar@{}[dr]|{\small{\rotatedadj}}
& 
\mathcal C 
\ar@<-.7ex>[d]_-{L}
\ar@{}[d]|{\tiny{\dashv}}
\\
& 
\mathcal D 
\ar@<.8ex>[ul]^{L(C \ot U(-))}
\ar@<-.75ex>[u]_{U}
}
\]
for which we shall appeal to the following (dual) version of Dubuc's Adjoint \linebreak Triangle Theorem~\cite{dubuc}:

\begin{theorem}\label{prop:dubuc-adj-tri-thm}
Consider two adjunctions $(L \dashv U:\mathcal D \to \mathcal C, \eta:1 \to UL, \eps:LU \to 1)$, $(L'\dashv U':\mathcal C \to \mathcal B, \eta':1 \to U'L', \eps':L'U' \to 1)$, and a functor $H:\mathcal B \to \mathcal C$ such that $LH \cong L'$. Denote by 
\vskip-1em
$$
\theta: \xymatrix@1@C=25pt{HU' \ar[r]^-{\eta HU'} & ULHU' \cong UL'U' \ar[r]^-{U\eps'} & U}
$$
the mate of $LH \cong L'$ under these adjunctions. If $L$ is precomonadic and the equaliser of the pair 
\vskip-1em
\begin{equation}\label{dubuc}
\xymatrix@C=35pt@R=10pt{
U'LX
\ar[rr]^{U'L \eta}
& 
& 
U'LULX
\\
&
U'L'U'LX
{\cong}
U'LHU'LX
\ar@{<-}[]!<0ex,-4ex>;[ul]!<-6ex,1ex>^-{\eta'U'L}  
\ar[]!<3ex,-3.5ex>;[ur]!<5ex,1ex>_-{U'L\theta L} 
&
}
\end{equation}
exists for every $X\in \mathcal B$, then $H$ has a right adjoint, given as the equaliser of~\eqref{dubuc}.
\end{theorem}

Consequently, the right adjoint of $U$, if exists, should be the equaliser of $L(1 \ot UL\eta^l)$ and of the composite $ L(1 \ot UL\theta L) \circ \eta'$, where $\eta'$ denotes the unit of the adjunction~\eqref{eq:adj}, and $\theta: UL(C \ot U(-)) \to U$ is the mate of the identity arrow on $LU$. 
The computations below show that the composite in question is in fact $L(1 \ot \eta^l UL)$. 
\[
\resizebox{1\textwidth}{!}{
\xymatrix@R=35pt@C=15pt{
L(C \ot ULX)
\ar[rr]^-{u \ot 1}
\ar[d]^{\hol}
&&
LC \ot L(C \ot ULX)
\ar[r]^-{L_2 \ot 1}
\ar[dr]^{{\hol}^{-1}}
\ar[d]^{L(1 \ot \eta^l) \ot 1}
&
LC \ot L\I \ot L(C \ot ULX)
\ar[r]^-{1 \ot {\hol}^{-1}}
&
LC \ot LUL(C \ot ULX)
\ar[r]^-{{\hol}^{-1}}
\ar@{}[ddr]|(.15){\eqref{h4l}}
& 
L(C \ot ULUL(C \ot ULX))
\ar@{<-}`u[lllll]`[lllll]_{\eta'}[lllll]
\ar[dd]^{L(1 \ot UL\eta^l UL)}
\ar`r[dddddr]`[ddddd]_(.3){L(1 \ot UL\theta L)}[ddddd]
&
\\
LC \ot LX 
\ar[dr]^{u \ot 1 \ot 1}
\ar@{-}@<+.25ex>`d[ddddrr][ddddrr]
\ar@{-}@<-.25ex>`d[ddddrr][ddddrr]
\ar@{}[ddddr]|{\hyperref[LC]{\eqref{LC}\text{-(iv)}}}
&
&
L(C \ot UL\I) \ot L(C \ot ULX)
\ar[d]^{1 \ot \hol}
&
L(C \ot UL(C \ot ULX) 
\ar[ur]^{L_2}
\ar@{}[u]|{\eqref{h6l}}
\ar[urr]_{L(1 \ot \eta^l UL)}
\ar[dr]^{L(1 \ot \eta^l UL)}
\ar[d]^{L(1 \ot \eta^l \ot 1)}
&
&
&
\\
&
LC \ot LC \ot LX 
\ar[r]_{L(1 \ot \eta^l) \ot 1 \ot 1}
\ar@{}[ur]|{(M)}
\ar[ddr]^{L_2 \ot 1 \ot 1}
\ar@{}[dr]|{\eqref{h4l}}
&
L(C \ot UL\I) \ot LC \ot LX
\ar[dr]^-{{\hol}^{-1}}
\ar@{}[ur]|{(N)}%
\ar[dd]^{{\hol}^{-1} \ot 1\ot 1}
&
L(C \ot UL\I \ot UL(C \ot ULX))
\ar[dr]^{L(1 \ot u_2)}
\ar@{}[r]|{\eqref{h5l}}
\ar[d]^{L(1 \ot 1 \ot U\hol)}
&
L(C \ot ULUL(C \ot ULX))
\ar@{}[uur]|{(N)}%
\ar@{}[ddr]|{(N)}%
\ar[d]^{L(1 \ot U\hol)}
\ar[r]^{L(1 \ot \eta^l)}
&
L(C \ot ULULUL(C \ot ULX))
\ar[d]^{L(1 \ot ULU\hol)}
&
\\
&
&
&
L(C \ot UL\I \ot U(LC \ot LX))
\ar[dr]^{L(1 \ot u_2)}
\ar@{}[r]|{(N)}
&
L(C \ot U(L\I \ot L(C \ot ULX))
\ar[d]^{L(1 \ot U(1 \ot \hol))}
&
L(C \ot ULU(L\I \ot L(C \ot ULX)))
\ar[d]^{L(1 \ot ULU(1 \ot \hol))}
&
\\
&
&
LC \ot L\I \ot LC \ot LX
\ar[rr]^{{\hol}^{-1}}
\ar@{}[ur]|{\eqref{h2l}}
\ar[d]^{1 \ot \e \ot 1}
&
&
L(C \ot U(L\I \ot LC \ot LX))
\ar[d]^{L(1 \ot U(\e \ot 1))}
&
L(C \ot ULU(L\I \ot LC \ot LX))
\ar[d]^{L(1 \ot ULU(\e\ot 1))}
&
\\
&
&
LC \ot LX
\ar[rr]^{{\hol}^{-1}}
\ar@{}[urr]|{(N)}
&
&
L(C \ot ULX)
\ar[r]^{L(1 \ot \eta^l UL)}
&
L(C \ot ULULX)
&
}
}
\]
Therefore, if the right adjoint of $U$ exists, it must be the equaliser of 
\[
\xymatrix@C=60pt{
L(C \ot UL(-)) 
\ar@<+.5ex>[r]^-{L(1 \ot UL\eta^l)}
\ar@<-.5ex>[r]_-{L(1 \ot \eta^l UL)}
& 
L(C \ot ULUL(-))
}
\]
The functor $L$ being precomonadic means that for each $X$, the diagram
\begin{equation}\label{corefl eq}
\xymatrix@C=45pt{
X 
\ar[r]^{\eta^l}
&
ULX 
\ar@<+.5ex>[r]^-{UL\eta^l}
\ar@<-.5ex>[r]_-{\eta^l UL}
& 
ULULX
}
\end{equation}
is a (coreflexive) equaliser.%
\footnote{%
A precomonadic functor is also said to be {\em of descent type}. Equivalent conditions for a left adjoint functor to be precomonadic are: (i) the unit $\eta^l$ is a regular monomorphism; (ii) the comparison functor into the Eilenberg-Moore category of the comonad induced by the adjunction, is fully faithful (see for example~\cite{ttt}).  
} %
Henceforth the proof is finished if we show that $L(C \ot -)$ preserves it. 

\medskip\noindent 
Consider thus an arrow $f: A \to L(C \ot ULX)$ such that 
\begin{equation}\label{f-eq}
L(1 \ot UL\eta^l ) \circ f = L(1 \ot \eta^l UL)\circ f
\end{equation}
holds, and denote by $g: UA \to ULX$ the two-way transpose of $f$ under the adjunctions $LU\dashv L(C \ot U(-))$ and $L \dashv  U$. That is, $g$ is the composite
\[
g:
\xymatrix{
UA 
\ar[r]^-{\eta^l U}
&
ULUA
\ar[r]^-{ULUf}
&
ULUL(C \ot ULX)
\ar[r]^-{U\eps'L}
&
ULX
}
\]
where $\eps'$ is the counit of the adjunction~\eqref{eq:adj}. As we shall use the  backwards expression, giving $f$ in terms of $g$, we need first to rewrite the unit $\eta'$: 
\[
\resizebox{\textwidth}{!}{$
\eta': \xymatrix@C=27pt{
1
\ar[r]^-{u \ot 1}
&
LC \ot (-)
\ar[r]^-{L_2 \ot 1}
\ar[dr]^{{\hol}^{-1}}
&
LC \ot L\I \ot (-)
\ar[r]^-{1 \ot {\hol}^{-1}}
&
LC \ot LU(-)
\ar[r]^-{{\hol}^{-1}}
\ar@{}[d]|{\eqref{h4l}}
&
L(C \ot ULU(-))
\ar@{=}[dl]
\\
&
&
L(C \ot U(-))
\ar[ur]^{L_2}
\ar@{}[u]|{\eqref{h6l}}
\ar[r]^{L(1 \ot \eta^l U)} 
&
L(C \ot ULU(-))
}$}
\]
with the purpose of emphasizing the composite 
\begin{equation}
\label{eq:eta-r}
\eta^r: 
\xymatrix{ 
1 
\ar[r]^-{u\ot 1} 
& 
LC\ot -  
\ar[r]^-{{\hol}^{-1}} 
& 
L(C \ot U(-))}
\end{equation}
as the unit for the claimed adjunction $U\dashv L(C \ot (-))$ and henceforth writing $\eta'$ as $\xymatrix@C=30pt@1{1 \ar[r]^-{\eta^r} & L(C \ot U(-)) \ar[r]^-{L(1 \ot \eta^l U)} & L(C \ot ULU(-))}$, in the usual manner of composing adjunctions. Now, the correspondence $g\mapsto f$ is given as expected by $f=L(1 \ot g)\circ \eta^r$, see the next diagram: 
\[
\resizebox{\textwidth}{!}{$
f:
\xymatrix@C=30pt{
A 
\ar[r]^-{\eta^r} 
& 
L(C \ot U(-))
\ar[r]^{L(1 \ot \eta^l U)}
\ar[dr]_{L(1 \ot g)}
&
L(C \ot ULUA)  
\ar[r]^-{L(1 \ot ULg)}
&
L(C \ot ULULX)
\ar[r]^{L(1 \ot U\eps^l L)}
&
L(C \ot ULX)
\\
&
&
L(C \ot ULX)
\ar[ur]_{L(1 \ot \eta^l UL)}
\ar@{-}@<-.25ex>`r[urr][urr]
\ar@{-}@<+.25ex>`r[urr][urr]
&
}
$}
\]

After some tedious computations using~\eqref{f-eq}, we see that $g$ equalises $UL\eta^l$ and $\eta^l UL$. Consequently, there is a unique arrow $\tilde g:UA \to X$ such that $\eta^l \circ \tilde g = g$:
\[
\xymatrix@C=60pt{
UA \ar[dr]^g \ar@{.>}[d]_{\tilde g}
\\
X 
\ar[r]^{\eta^l}
&
ULX 
\ar@<+.5ex>[r]^-{UL\eta^l}
\ar@<-.5ex>[r]_-{\eta^l UL}
& 
ULULX
}
\] 
Put now $\tilde f:=L(1 \ot \tilde g) \circ \eta^r$. It follows at once that $L(1 \ot \eta^l) \circ \tilde f = f$ holds. 
\[
\xymatrix@C=60pt{
A 
\ar[dr]^{f}
\ar@{.>}[d]_{\tilde f}
\\
L(C \ot X)
\ar[r]^-{L(1\ot \eta^l)}
&
L(C \ot ULX) 
\ar@<+.5ex>[r]^-{L(1 \ot UL\eta^l)}
\ar@<-.5ex>[r]_-{L(1 \ot \eta^l UL)}
& 
L(C \ot ULULX)
}
\]
The uniqueness of $\tilde f$ is now a consequence of $L(1 \ot \eta^l):L(C \ot X) \to L(C \ot ULX)$ being monomorphic. 
\end{proof}


\begin{remark}\label{rem:R}
\begin{enumerate}
\item As in the proof of the adjoint triangle theorem of~\cite{dubuc}, the counit of the adjunction $U \dashv L(C \ot (-))$ is obtain as the unique arrow into the equaliser~\eqref{corefl eq}:
\begin{equation}\label{eq:eps-r}
\xymatrix@C=50pt@R=25pt{
UL(C \ot X) 
\ar[r]^{UL(1 \ot \eta^l)} 
\ar@{.>}[d]_{\eps^r}
& 
UL(C \ot ULX)
\ar@<+.5ex>[r]^-{UL(1\ot UL\eta^l)}
\ar@<-.5ex>[r]_-{UL(1 \ot \eta^l UL)}
\ar[d]^{\theta L}
& 
UL(C \ot ULULX)
\ar[d]^{\theta LUL}
\\
X 
\ar[r]^{\eta^l} 
& 
ULX 
\ar@<+.5ex>[r]^-{UL\eta^l}
\ar@<-.5ex>[r]_-{\eta^l UL}
& 
ULULX
}
\end{equation}
\item The monoidal structure of $L(C \ot -)$, obtained by transporting the (co)monoidal structure of $U$ along the adjunction, can be simplified to the following 
\begin{equation}
\resizebox{.875\textwidth}{!}{%
$\begin{aligned}\label{eq:lax}
& \xymatrix{
r_0: \I 
\ar[r]^-{u} 
&
LC
}
\\
& \xymatrix@C=40pt{
r_2: L(C \ot X) \ot L(C \ot Y) 
\ar[r]^-{{\hol}^{-1}} 
&
L(C \ot X \ot UL(C\ot Y) )
\ar[r]^-{L(1\ot 1\ot \eps^r)}
& 
L(C \ot X \ot Y)
}
\end{aligned}
$}
\end{equation}
This can be seen from
\[
\xymatrix@C=35pt@R=13pt{
\I 
\ar[r]^{u}
&
LC \ar[r]^{{\hol}^{-1}}
\ar@<-.25ex>@{-}`d[dr][dr]
\ar@<+.25ex>@{-}`d[dr][dr]
\ar@{}[dr]|{\eqref{h1l}}
&
L(C\ot U\I)
\ar[d]^-{L(1 \ot u_0^{-1})}
\ar@{<-}`u[ll]`[ll]_{\eta^r}[ll]
\\
&
&
LC
}
\]
and from
\[\resizebox{.9\textwidth}{!}{%
\xymatrix@C=15pt@R=20pt{
L(C \ot X) \ot L(C \ot Y) 
\ar[r]^-{u \ot 1}
\ar@{=}[d]
&
LC \ot L(C \ot X) \ot L(C \ot Y)
\ar[r]^-{{\hol}^{-1}}
\ar[d]^{{\hol}^{-1}\ot 1}
\ar@{}[dr]|{\eqref{h2l}}
&
L(C \ot U(L(C \ot X) \ot L(C \ot Y)))
\ar[d]^-{L(1 \ot u_2^{-1})} 
\ar@{<-}`u[ll]`[ll]_{\eta^r}[ll]
\\
L(C \ot X) \ot L(C \ot Y)
\ar[r]^-{\eta^r \ot 1}
\ar@<-.25ex>@{-}`d[dr][dr]
\ar@<+.25ex>@{-}`d[dr][dr]
\ar@{}[dr]|{U\dashv L(C \ot -)}
&
L(C \ot UL(C \ot X)) \ot L(C \ot Y)
\ar[r]^{{\hol}^{-1}}
\ar[d]^{L(1 \ot \eps^r) \ot 1}
&
L(C \ot UL(C \ot X) \ot UL(C \ot Y))
\ar[dd]^{L(1 \ot \eps^r \ot \eps^r)}
\\
&
L(C \ot X) \ot L(C \ot Y)
\ar[d]^{{\hol}^{-1}}
\ar@{}[r]|{(M)+(N)}
&
\\
&
L(C \ot X \ot UL(C \ot Y))
\ar[r]^{L(1 \ot 1 \ot \eps^r)} 
&
L(C \ot X \ot Y)}}
\]

\item The left coHopf operator $\chol$ associated to the adjunction $U\dashv R$ is precisely ${\hol}^{-1}$, the inverse of the left Hopf operator, as shown in the diagram below:
\begin{equation}\label{eq:chol=hol(1-)}
\vcenter{\xymatrix@R=20pt@C=45pt{
\chol: L(C \ot X) \ot A 
\ar[r]^-{1 \ot \eta^r}
\ar[d]_{{\hol}^{-1}}
\ar@{}[dr]|{(N)}
&
L(C \ot X) \ot L(C \ot UA)
\ar[d]^{{\hol}^{-1}}
\ar`r[dd]`[dd]^{r_2}[dd]
\ar@{}[dr]|(.4){\eqref{eq:lax}}
&
\\
L(C \ot X \ot UA) 
\ar[r]^-{L(1 \ot \eta^r)}
\ar@{-}@<+.25ex>`d[dr][dr]
\ar@{-}@<-.25ex>`d[dr][dr]
\ar@{}[dr]|{U\dashv L(C \ot -)}
&
L(C \ot X \ot UL(C \ot UA))
\ar[d]^{L(1 \ot 1\ot \eps^r)}
&
\\
&
L(C \ot X \ot UA)
&
}}
\end{equation}
Consequently, the adjunction $U\dashv R$ is automatically left coHopf. 
\end{enumerate}
\end{remark}


\subsection{Ambidextrous Hopf adjunctions}\label{ssec:ambidex}

We shall now turn to the special case when $C$ is (isomorphic to) the unit object $\I$.\linebreak 
Notice that having morphisms $u:\I \to L\I$, $ m:L\I \ot L\I \to L\I$ satisfying~\eqref{LC} implies that $u$ is a two-sided inverse for the multiplication-like morphism $ m$, such that the Frobenius laws~\eqref{eq:Frob_algebra} hold. By this means exactly that the comonoid $(L\I, L_2, L_0)$ is in fact a Frobenius monoid with unit $u$ and multiplication $ m$ (see Definition~\ref{def:frob}).

Condition~\ref{cond2} of Theorem~\ref{frob1} is then automatically satisfied, as $L\eta^l:LX \to LULX$ is always a split monomorphism. 

We can therefore restate Theorem~\ref{frob1} as follows: 

\begin{theorem}\label{frob3}

Let $L\dashv U:\D \to \C $ a left Hopf adjunction between monoidal categories, such that the comonoid $(L\I,L_2,L_0)$ is a Frobenius monoid. If $L$ is precomonadic, then it is right adjoint to $U$ and the resulting adjunction $U\dashv L$ is left coHopf. 

\end{theorem}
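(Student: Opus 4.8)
The plan is to read Theorem~\ref{frob3} as the instance $C = \I$ of Theorem~\ref{frob1}, so that essentially all the heavy lifting has already been carried out; what remains is to confirm that the three hypotheses of Theorem~\ref{frob1} hold in this degenerate situation and to extract the coHopf conclusion from Remark~\ref{rem:R}.

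First I would set $C = \I$, so that $LC = L\I$ and the data $u : \I \to L\I$, $m : L\I \ot L\I \to L\I$ required in hypothesis~(1) of Theorem~\ref{frob1} are exactly a unit and a multiplication on $L\I$. I would then check that, for a Frobenius monoid $(L\I, L_2, L_0, u, m)$ in the sense of Definition~\ref{def:frob}, the four relations~\eqref{LC} are automatically satisfied: diagrams (i) and (ii) are precisely the two Frobenius laws~\eqref{eq:Frob_algebra} with comultiplication $d = L_2$; diagram (iii) is the right unit law $m \circ (1 \ot u) = 1$; and diagram (iv) follows by composing (i) with the left unit law $m \circ (u \ot 1) = 1$. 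This establishes hypothesis~(1), while hypothesis~(2), the precomonadicity of $L$, is assumed outright.

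Next I would dispose of condition~\ref{cond2}. With $C = \I$ the morphism $L(1 \ot \eta^l)$ is, up to the unit isomorphism $\I \ot X \cong X$, just $L\eta^l : LX \to LULX$, which is a split monomorphism because the triangle identity gives $\eps^l L \circ L\eta^l = 1$. Hence condition~\ref{cond2} holds for free. All three hypotheses of Theorem~\ref{frob1} being verified, that theorem yields a right adjoint $L(\I \ot -) \cong L$ for $U$, which is the first assertion of Theorem~\ref{frob3}.

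Finally, for the coHopf statement I would simply invoke Remark~\ref{rem:R}(3), which identifies the left coHopf operator $\chol$ of the adjunction $U \dashv L(C \ot -)$ with the inverse ${\hol}^{-1}$ of the left Hopf operator. Since $L \dashv U$ is left Hopf by hypothesis, $\hol$ is invertible, so $\chol$ is invertible as well, and the adjunction $U \dashv L$ is left coHopf. I expect no genuine obstacle here: the substantive content lives entirely in Theorem~\ref{frob1} and Remark~\ref{rem:R}, and the only mild point requiring care is confirming the direction of the correspondence in the second paragraph, namely that a Frobenius monoid structure \emph{forces}~\eqref{LC} (in particular, that (iv) is not an independent axiom but a consequence of (i) together with the left unit law).
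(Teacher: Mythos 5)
Your proposal is correct and follows essentially the same route as the paper: the text preceding Theorem~\ref{frob3} observes exactly that with $C=\I$ the data $(u,m)$ of a Frobenius monoid structure on $(L\I,L_2,L_0)$ is equivalent to~\eqref{LC}, that condition~\ref{cond2} holds because $L\eta^l$ is a split monomorphism, and the left coHopf claim is Remark~\ref{rem:R}(3). Your explicit derivation of \eqref{LC}(iv) from \eqref{LC}(i) and the left unit law is a correct filling-in of a detail the paper leaves implicit.
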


Consequently, the functor $L$ not only carries a comonoidal structure, but also a monoidal one, as computed in Remark~\ref{rem:R}. The next proposition shows that $L$, endowed with these two structures, becomes Frobenius monoidal:

\begin{proposition}\label{prop:fmf}
Assume the hypotheses of Theorem~\ref{frob3} hold. Then $L$ is a Frobenius monoidal functor. 
\end{proposition}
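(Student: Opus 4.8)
The plan is to verify that $L$, equipped with its comonoidal structure $(L_2, L_0)$ from~\eqref{eq:induced_colax0} and the monoidal structure $(r_2, r_0)$ from~\eqref{eq:lax} of Remark~\ref{rem:R} (now with $C \cong \I$), satisfies the two Frobenius compatibility conditions~\eqref{eq1:Frob} and~\eqref{eq2:Frob}. Since these two conditions are mirror images of each other under the passage to $\C^{\cop}$ (which swaps left and right Hopf operators), it suffices to establish one of them, say~\eqref{eq1:Frob}, and obtain the other by this symmetry. So the real work is a single commutative-diagram chase showing
\[
(r_2 \ot 1) \circ (1 \ot L_2) = L_2 \circ r_2
\]
as morphisms $LX \ot L(Y \ot Z) \to L(X \ot Y) \ot LZ$.

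\textbf{Key steps.}
First I would substitute $C = \I$ throughout, so that $r_0 = u:\I \to L\I$ and $r_2 = L(1 \ot \eps^r) \circ {\hol}^{-1}$ in the simplified form of~\eqref{eq:lax}. The central observation is that, by Remark~\ref{rem:R}(3), the left coHopf operator $\chol$ coincides with ${\hol}^{-1}$; hence the monoidal structure of $L$ is expressed entirely through the inverse Hopf operator, which is exactly the morphism for which the Lemma's relations~\eqref{h1l}--\eqref{h8l} (and their inverses) are available. I would then build the Frobenius diagram as a composite of smaller cells, each commuting either by naturality $(N)$, by monoidal functoriality $(M)$, or by one of the Hopf-operator identities. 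The decisive ingredients will be~\eqref{h2l} (the ``associativity'' of $\hol$ with respect to $u_2$), which governs how two successive applications of ${\hol}^{-1}$ interact, and~\eqref{h6l}, which relates $\hol$ to the comultiplication $L_2$; these encode precisely the interaction between the multiplicative and comultiplicative structures that the Frobenius law demands. The counit $\eps^r$ of the adjunction $U \dashv L$ enters through its defining equaliser property~\eqref{eq:eps-r}, and I expect to need the naturality of $\eps^r$ together with the comonoidal-adjunction diagrams~\eqref{unit-lax-colax} to push the $\eps^r$-components past the $L_2$ morphisms.

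\textbf{Main obstacle.}
The hardest part will be organizing the chase so that the two copies of ${\hol}^{-1}$ appearing in $r_2 \circ r_2$-type expressions can be merged using~\eqref{h2l} while simultaneously commuting the coproduct morphism $L_2$ through them via~\eqref{h6l}; the bookkeeping of which tensor factor each operator acts on is delicate, since $\hol$ acts on the \emph{right} tensor slot and the Frobenius law forces interaction with $L_2$ on a \emph{different} slot. A secondary subtlety is that the monoidal structure $(r_2,r_0)$ is defined only up to transport along the adjunction, so I would first confirm via the diagrams already displayed in Remark~\ref{rem:R}(2) that~\eqref{eq:lax} genuinely gives the monoidal structure, and then verify that with $C \cong \I$ the unit $u = r_0$ is the Frobenius unit identified in Section~\ref{ssec:ambidex}, so that the monoid $(L\I, r_2, r_0)$ agrees on objects with the Frobenius monoid $(L\I, m, u)$ — this consistency is what ultimately ties the abstract Frobenius-monoid hypothesis to the functor-level Frobenius conditions.
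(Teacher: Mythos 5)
Your overall strategy --- a direct diagram chase expressing $r_2$ through ${\hol}^{-1}$ and $\eps^r$ and invoking \eqref{h2l}, \eqref{h6l} together with naturality --- is sound and is essentially what the paper's displayed diagrams do (you omit \eqref{h4l} and the coHopf version of \eqref{h5l}, which are also needed, but that is a minor point in a sketch). You do miss a useful shortcut: once one observes that $r_2$ coincides with the costrength $\nll$ of \eqref{(co)strengths for Hopf} and that $\nlr$ is precisely $L_2$ (via \eqref{eq:chol=hol(1-)} and \eqref{h4l}), the two Frobenius laws become exactly the first relations of \eqref{lf4} and \eqref{lf5}, whose proofs were already carried out in \eqref{eq:proof of lf4} and \eqref{eq:proof of lf5}, so no new computation is required.

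The genuine gap is your reduction of \eqref{eq2:Frob} to \eqref{eq1:Frob} ``by passage to $\C^{\cop}$.'' That symmetry is not available under the hypotheses of Theorem~\ref{frob3}: reversing the tensor product turns the \emph{left} Hopf operator into the \emph{right} one, and the adjunction is only assumed to be left Hopf, so $\hor$ need not be invertible; moreover the monoidal structure $r_2$ you are using is built from ${\hol}^{-1}$ and the counit of the \emph{left} coHopf adjunction $U\dashv L$, and does not transform into itself under $\cop$. This is also visible at the level of the linearly distributive axioms: \eqref{eq1:Frob} and \eqref{eq2:Frob} correspond to relations in the two distinct groups \eqref{lf4} and \eqref{lf5}, which require separate diagrams even in the fully symmetric biHopf setting of Theorem~\ref{bihopf is linear}. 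You must therefore prove both compatibilities separately; fortunately the second is the easier chase, needing only \eqref{h6l} and naturality of $\eps^r$, all of which are left-handed and hence available.
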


\begin{proof}
As $L\dashv U$ is left Hopf and $U\dashv L$ is left coHopf, we can recover the left side of a linearly distributive functor (the left (co)strengths) as in Theorem~\ref{bihopf is linear}. That is, apply~\eqref{(co)strengths for Hopf} to obtain $\nll:LX\ot LY\to L(X \ot Y)$ and $\nlr:L(X \ot Y) \to LX\ot LY$. But from~\eqref{eq:lax} we see that $\nll$ coincides with $r_2$, and from~\eqref{eq:chol=hol(1-)} and \eqref{h4l}, $\nlr$ is precisely $L_2$. 

Therefore the proof of the Frobenius law~\eqref{eq1:Frob} reduces to~\eqref{eq:proof of lf4}, and 
the proof of the second Frobenius law~\eqref{eq2:Frob} to~\eqref{eq:proof of lf5}. 
We include them below only for completeness.

Proof of~\eqref{eq1:Frob}:
\[
\resizebox{\textwidth}{!}{
\xymatrix@C=33pt@R=20pt{
&
LX \ot L(Y \ot Z) 
\ar[r]^{{\hol}^{-1}}
\ar[d]^{1 \ot L(1 \ot \eta^l)}
\ar@<+.1ex>`l[dd]`[dd]_{1 \ot L_2} [dd]
\ar@{}[dr]|{(N)}
&
L(X \ot UL(Y \ot Z))
\ar[d]^{L(1 \ot UL(1 \ot \eta^l))}
\ar[r]^-{L(1 \ot \eps^r)}
\ar@{}[dr]^(.6){(N)}
&
L(X \ot Y \ot Z)
\ar[d]^{L(1 \ot 1\ot \eta^l)}
\ar@{<-}`u[ll]`[ll]_{r_2}[ll]
\\
&
LX \ot L(Y \ot ULZ)
\ar[r]^{{\hol}^{-1}}
\ar[d]^{1 \ot \hol}
\ar@{}[dr]|{(N)}
\ar@{}[d]_{\eqref{h4l}}
&
L(X \ot UL(Y \ot ULZ))
\ar[r]^-{L(1 \ot \eps^r)}
\ar[d]|{L(1 \ot U\hol) \overset{\eqref{eq:chol=hol(1-)}}{=} L(1 \ot U{\chol}^{-1})}
&
L(X \ot Y \ot ULZ)
\ar@{=}[dd]
\\
& 
LX \ot LY \ot LZ
\ar[r]^{{\hol}^{-1}}
\ar[d]^{{\hol}^{-1} \ot 1}
\ar@<+.4ex>`l[dddrr]`[dddrr]_{r_2\ot 1}`[ddrr][ddrr]
\ar@{}[dr]|{\eqref{h2l}}
&
L(X \ot U(LY \ot LZ))
\ar[d]^{L(1 \ot u_2^{-1})}
\ar@{}[r]|(.65){\mathsf{co}\eqref{h5l}}
\ar@{}[r]|(1.2){\eqref{h4l}}
&
\\
&
L(X \ot ULY) \ot LZ 
\ar[r]^{{\hol}^{-1}}
\ar@{-}@<-.25ex>`d[dr][dr]
\ar@{-}@<+.25ex>`d[dr][dr]
&
L(X \ot ULY \ot ULZ)
\ar[r]^-{L(1 \ot \eps^r \ot 1)}
\ar[d]^{\hol}
\ar@{}[dr]^(.6){(N)}
&
L(X \ot Y \ot ULZ)
\ar[d]^{\hol}
\\
&
&
L(X \ot ULY) \ot LZ 
\ar[r]^-{L(1 \ot \eps^r) \ot 1}
&
L(X \ot Y) \ot LZ
\ar@{<-}`r[uuuu]`[uuuu]_{L_2}[uuuu]
\\
&
&
&
}}
\]

Proof of~\eqref{eq2:Frob}:
\[
\xymatrix@R=20pt@C=55pt{
L(X \ot Y) \ot LZ
\ar[r]^{L_2 \ot 1}
\ar[d]_{{\hol}^{-1}}
&
LX \ot LY \ot LZ 
\ar[d]^{1 \ot {\hol}^{-1}}
\\
L(X \ot Y \ot ULZ)
\ar[r]^{L_2}
\ar[d]|{L(1 \ot 1 \ot \eps^r)}
\ar@{}[dr]|{(N)}
\ar@{}[ur]|{\eqref{h6l}}
&
LX \ot L(Y \ot ULZ)
\ar[d]|{1 \ot L(1 \ot \eps^r)}
\\
L(X \ot Y \ot Z) 
\ar[r]^{L_2}
\ar@{<-}`l[uu]`[uu]^{r_2}[uu]
&
LX \ot L(Y \ot Z) 
\ar@{<-}`r[uu]`[uu]_{1 \ot r_2}[uu]
}
\]
\end{proof}

\begin{remark}\label{rem:biHopf}
If in Theorem~\ref{frob3}, the adjunction $L\dashv U$ is assumed to be \emph{right} Hopf instead of left Hopf, similar reasoning produces again the adjunction $U\dashv L$, but this time \emph{right coHopf}, with unit 
\[
{\bar\eta}^r : \xymatrix{
1 
\ar[r]^-{1 \ot u}
& 
- \ot L\I 
\ar[r]^-{{\hor}^{-1}}
& 
LU(-)
}
\]
and counit ${\bar\eps}^r :UL \to 1$ the unique natural transformation such that
\[
\xymatrix{
ULX 
\ar[r]^-{UL\eta^l}
\ar[d]_{{\bar\eps}^r}
&
ULULX
\ar[d]^{\bar \theta L}
\\
X 
\ar[r]^-{\eta^l}
&
ULX
}
\] 
holds, where $\bar\theta$ is the composite\footnote{Notice that the evaluation and coevaluation morphisms for the dual pair $(L\I, L\I)$ do not change.}  
\[
\resizebox{\textwidth}{!}{
\xymatrix{
\bar\theta: 
ULU(-)
\ar[r]^-{\eta^l ULU} 
& 
ULULU(-)
\ar[r]^-{U\hor} 
& 
U(LU(-) \ot L\I)
\ar[r]^-{U(\hor)\ot 1}
& 
U((-) \ot L\I\ot L\I)
\ar[r]^-{U(-\ot \e)} 
& 
U
}}
\]
Now the monoidal structure on $L$ is 
\begin{align*}
&
{\bar r}_0: \xymatrix@C=40pt{
\I 
\ar[r]^u
&
L\I}
\\
&
{\bar r}_2: \xymatrix@C=40pt{
LX \ot LY 
\ar[r]^-{{\hor}^{-1}} 
&
L(ULX \ot Y) 
\ar[r]^-{L({\bar \eps}^r \ot 1)}
&
L(X \ot Y)
}
\end{align*}

If we go one step further with the assumptions on Theorem~\ref{frob3}, and ask for the original adjunction $L\dashv U$ to be both left and right Hopf, we obtain that $L$ is right adjoint to $U$ in two possibly different%
\footnote{These two structures are generally not the same; consider for example a finite dimensional Hopf algebra $H$ over a commutative field, such that $H$ is not \emph{unimodular} (like Sweedler's four dimensional Hopf algebra), and the Hopf adjunction between free functor $L=-\ot H$ and the forgetful functor $U$ from the category of right $H$-modules to the category of vector spaces. Then the two adjunctions $U\dashv L$ have different (co)units, therefore, they are not identical.
} %
ways, one being left coHopf with unit $\eta^r$ and counit $\eps^r$ given by~\eqref{eq:eta-r} and~\eqref{eq:eps-r}, and the other right coHopf with ${\bar \eta}^r$, ${\bar \eps}^r$ described above. As the standard composite $\xymatrix@1{L \ar[r]^-{\eta^r L} & LUL \ar[r]^-{L {\bar \eps}^r} & L}$ is a (monoidal) isomorphism between the two versions of right adjoints of $U$, it follows that $(L\dashv U, \eta^r, \eps^r)$ is not only left coHopf, but also right coHopf. 

Therefore a \emph{two-sided} Hopf adjunction $L\dashv U$ such that $L$ is comonadic and $L\I$ is a Frobenius monoid yields a \emph{two-sided} biHopf and \emph{ambidextrous} adjunction $L\dashv U\dashv L$, with $L$ Frobenius monoidal. 

\end{remark}

\begin{remark}

Compare the above results with~\cite{sh}, where a very particular case of a Hopf adjunction between autonomous categories was considered.

More precisely, let $\C$ denote a finite tensor category. Then the forgetful functor $U$ from the monoidal center of $\C$ is strong monoidal, and has both left adjoint~\cite{ds:centres} and right adjoint~\cite[Proposition~7.6]{day-street:quantum}; in particular it induces a biHopf adjunction $L\dashv U\dashv R$. In~\cite{sh}, it is shown that the left and right adjoints of $U$ are isomorphic, $L\cong R$, if and only if $L$ commutes with duals (on the level of objects only), if and only if $L{\I}$ is a self dual object (under the additional assumption on $\I$ of being simple), if and only if $\C$ is unimodular - the latter being a specific property of finite tensor categories.

Our results were obtained in a more general setting of \emph{arbitrary} Hopf adjunctions between monoidal categories, but with the additional hypotheses that $L$ is precomonadic and that $L\I$ is not only a self-dual object, but a Frobenius monoid. The exactness conditions we impose are fulfilled in~\cite{sh} because of the special properties of the category considered. Consequently, the results we obtain are more precise, in the sense that, first, we provide conditions for the existence of the right adjoint, and second, that the resulting ambidextrous adjunction $L\dashv U\dashv L$ takes into account also the (co)monoidal nature of $L$, making it a Frobenius monoidal functor;%
\footnote{%
It is known that Frobenius monoidal functors preserve dual pairs~\cite{day-pastro:Frob}. In~\cite{balan:frob}, it was shown conversely that a functor between autonomous categories which preserves duals \emph{in a coherent manner with respect to evaluation and coevaluation morphisms}, is Frobenius monoidal.} while in~\cite{sh} this aspect does not seem to be fully considered. 

\end{remark}


\section{Hopf monads and Frobenius-type properties} \label{sec:hm}


\subsection{On Hopf (co)monads}\label{sect:hm}

A monad $(T, \mu:T^2\to T, \eta:1 \to T)$ on a monoidal category $\C$ is called \emph{a comonoidal monad} if its functor part is endowed with a comonoidal structure \linebreak $(T_2: T(X \ot Y) \to TX \ot TY, T_0:T\I \to \I)$, such that the unit $\eta$ and the multiplication $\mu$ become comonoidal natural transformations. 

A comonoidal monad is \emph{a left Hopf monad}~\cite{blv:hmmc} if the following natural transformation (called left fusion operator) is an isomorphism: 
\[
\xymatrix@C=15pt{T(- \ot T(-) ) \ar[r]^-{T_2} & T(-) \ot T^2(-) \ar[r]^{1 \ot \mu }& T(-) \ot T(-)}
\]
A right Hopf monad~\cite{blv:hmmc} is a left Hopf monad on $\C^\cop$. 

\medskip

Any adjunction $L\dashv U:\D \to \C$ produces a monad $T=UL$. If the adjunction is comonoidal, so is the monad. Furthermore, if the adjunction is left or right Hopf, the monad is again such~\cite{blv:hmmc}. 

Conversely, for any monad $T$ on a category $\C$, denote as in (the end of) Section~\ref{ssec:frob mon} by $\C^T$ the Eilenberg-Moore category of $T$-algebras, and by $L^T\dashv U^T:\C^T \to \C$ the adjunction between the free and the forgetful functor. There is a one-to-one correspondence between comonoidal structures on the underlying functor $T$ making it a comonoidal monad, 
and monoidal structures on the category $\C^T$ such that the forgetful functor $U^T$ becomes strict monoidal~\cite{moerdijk}, in particular exhibiting $L^T \dashv U^T$ as a comonoidal adjunction. One step further, $L^T \dashv U^T$ is a left/right Hopf adjunction if and only if $T$ is a left/right Hopf monad~\cite{blv:hmmc}.%


\paragraph{Hopf comonads} By reversing the arrows, one obtains the notions of a monoidal comonad $(G:\C \to \C,g_2:GX \ot GY \to G(X \ot Y),g_0:\I \to G\I)$, respectively a left/right Hopf comonad~\cite{cls:hmc} on a monoidal category. 

The comonad $G=UR$ generated by a coHopf adjunction $U \dashv R$ is a Hopf comonad and conversely, any left/right Hopf comonad $G$ on a monoidal category $\C$ produces a left/right coHopf adjunction $U_G \dashv R_G:\C \to \C_G$ between the corresponding forgetful and cofree functors, where $\C_G$ denotes the Eilenberg-Moore category of $G$-coalgebras. The reader can easily derive their descriptions by duality.


\subsection{BiHopf monads produce linearly distributive \\ (co)monads}\label{hm and lf}

Recall that (co)Hopf adjunctions determine Hopf (co)monads and conversely, any Hopf (co)monad induces a (co)Hopf adjunction between the (co)free and forgetful functor. Any biHopf adjunction $L\dashv U\dashv R$ determines not only an adjunction between the Hopf monad $T=UL$ and the Hopf comonad $G=UR$, but also a linearly distributive adjunction $(U,U)\dashv (R,L)$. Consequently, the composite $(G,T)$ is a comonad in the 2-category $\mathbf{MonLinDist}$.

Conversely, any monad $T$ on a category $\C$, having a \emph{right adjoint} $G$, determines a comonad structure on $G$. The categories of $G$-coalgebras $\C_G$ and of $T$-algebras $\C^T$ are isomorphic~\cite{em:adjoint}, such that the isomorphism functor $Q:\C^T \to \C_G$ commutes with the forgetful functors, namely $U_G =U^T Q$. Via the isomorphism $Q$, the forgetful functor $U^T$ has not only a left adjoint $L^T$, but gains also a right adjoint $R^T:=Q R_G$.

If we assume that $T$ is a comonoidal monad, then its right adjoint $G$ becomes a monoidal comonad, while the forgetful functors $U^T$ and $U_G$ are strong monoidal and so is $Q$. Thus in the triple adjunction $L^T \dashv U^T \dashv R^T$, the functor $U^T$ is strong monoidal, while its left and right adjoints are comonoidal, respectively monoidal. 

One step further, suppose that $T$ is a Hopf monad and its adjoint $G$ is a Hopf comonad. We shall call such $T$ a \emph{biHopf monad}. For a biHopf monad $T$, with right adjoint $G$, the adjunction $L^T \dashv U^T$ is a Hopf adjunction, and $U_G \dashv R_G$ is a coHopf adjunction. As (co)Hopf adjunctions are stable under strong monoidal isomorphisms, $U^T \dashv R^T$ is a coHopf adjunction. Consequently, $L^T \dashv U^T \dashv R^T$ is a biHopf adjunction, thus Corollary~\ref{bihopf} applies and yields the comonad $(G,T)$ in the 2-category $\mathbf{MonLinDist}$. We record the above discussion in the following:

\begin{corollary}

Let $T$ a biHopf monad on a monoidal category $\C$. Then the pair $(G, T)$ is a comonad in $\mathbf{MonLinDist}$, induced by the adjunction $(U^T, U^T)\dashv (R^T, L^T)$.

\end{corollary}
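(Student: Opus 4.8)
The plan is to treat the statement as a direct consequence of Theorem~\ref{bihopf is linear}: once the data of the biHopf monad $T$ are repackaged as a biHopf \emph{triple adjunction} $L^T \dashv U^T \dashv R^T$, part~\ref{bihopf} of that theorem furnishes the linearly distributive adjunction, and the claimed comonad is then simply the comonad this adjunction generates on $\C$. So the first task is to produce the triple adjunction. Since $T$ admits a right adjoint $G$, Eilenberg--Moore duality~\cite{em:adjoint} gives an isomorphism $Q:\C^T \to \C_G$ commuting with the forgetful functors, $U_G = U^T Q$; hence $R^T := Q R_G$ is right adjoint to $U^T$, producing $L^T \dashv U^T \dashv R^T$. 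As $T$ is comonoidal, $G$ is monoidal, so $U^T$ and $U_G$ are strong monoidal and $Q$ is a strong monoidal isomorphism; thus $U^T$ is strong monoidal with comonoidal left adjoint $L^T$ and monoidal right adjoint $R^T$.

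Next I would check the Hopf conditions. Because $T$ is a Hopf monad, $L^T \dashv U^T$ is a Hopf adjunction; because $G$ is a Hopf comonad, $U_G \dashv R_G$ is a coHopf adjunction. Transporting the latter along the strong monoidal isomorphism $Q$ — using that (co)Hopf adjunctions are stable under such isomorphisms — shows that $U^T \dashv R^T$ is coHopf. Hence $L^T \dashv U^T \dashv R^T$ is biHopf, and part~\ref{bihopf} of Theorem~\ref{bihopf is linear} applies, yielding the adjunction $(U^T, U^T) \dashv (R^T, L^T)$ in $\mathbf{MonLinDist}$ with unit $(\eta^r, \eps^l)$ and counit $(\eps^r, \eta^l)$.

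Finally I would extract the comonad. Any adjunction in a $2$-category generates a comonad on the codomain of its left adjoint, here $\C$; its underlying endo-$1$-cell is the composite of the right adjoint followed by the left adjoint, with counit the adjunction counit and comultiplication whiskered from the unit. Since composition in $\mathbf{MonLinDist}$ is component-wise, this endofunctor is $(U^T R^T, U^T L^T)$, with counit $(\eps^r, \eta^l)$. I then identify the components: $U^T L^T = T$ by definition of the Eilenberg--Moore adjunction, while $U^T R^T = U^T Q R_G = U_G R_G = G$. This exhibits $(G, T)$ as the comonad generated by $(U^T, U^T) \dashv (R^T, L^T)$, as claimed.

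Since Theorem~\ref{bihopf is linear} does the real work, the argument is essentially bookkeeping, and I expect the only delicate point to be the transport of the coHopf structure across $Q$: one must verify that the coHopf operators of $U_G \dashv R_G$ are carried precisely to those of $U^T \dashv R^T$ by the strong monoidal structure of the isomorphism $Q$, so that $U^T R^T$ is genuinely recovered as the Eilenberg--Moore comonad $G$. As $Q$ is an isomorphism of monoidal categories this is routine, but it is the step where the hypothesis that $G$ is a Hopf comonad is actually consumed.
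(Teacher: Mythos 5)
Your argument is correct and follows the paper's own route essentially verbatim: transferring the coHopf adjunction $U_G \dashv R_G$ across the Eilenberg--Moore isomorphism $Q$ to obtain the biHopf triple adjunction $L^T \dashv U^T \dashv R^T$, then invoking part~\ref{bihopf} of Theorem~\ref{bihopf is linear} and reading off the generated comonad $(U^T R^T, U^T L^T) = (G,T)$. The extra bookkeeping you supply (the component-wise composition in $\mathbf{MonLinDist}$ and the identification $U^T R^T = U_G R_G = G$) is implicit in the paper and correct.
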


\begin{remark}
A Hopf monad $T$ on a monoidal category which has a right adjoint is not necessarily a biHopf monad. 

To see this, let $\C$ be the category of sets and functions $\mathsf{Set}$, with its usual cartesian monoidal structure, and let $\mathfrak G$ be a (finite) non-trivial group. Then the monad $T= - \times \mathfrak G $ is easily seen to be both left and right Hopf, while its right adjoint (comonad) $G= \mathsf {Set}(\mathfrak G,-)$ is not a Hopf comonad - the cofusion operators failing to be bijective for cardinality reasons.

However, in case the base category is \emph{autonomous}, $T$ is a Hopf monad if and only if it is a biHopf monad. %
This happens because: (i) the associated category of $T$-algebras is autonomous~\cite{bv:hm}, and (ii) the forgetful functor $U^T$ is strong (strict, in fact) monoidal. %
Then by~\cite[Proposition~7.6]{day-street:quantum}, the functor $U^T$ has not only as left adjoint the free functor $L^T$, but also gains a right adjoint $R^T$ given by the double dual of $L^T$.\footnote{%
There is however an error in \emph{op.~cit.}, in the sense that it asserts creation of an infinite chain of adjunction. In fact one can only prove that $U$ has left adjoint if and only if it has a right adjoint.} %
Consequently, there is a triple {\em biHopf} adjunction $L^T \dashv U^T \dashv R^T$, due to $U^T$ {\em being strong monoidal between autonomous categories}. Thus $T$ has the structure of biHopf monad (see also the more recent~\cite[Corollary~3.12]{bv:hm} and \cite[Lemma~3.5]{bv:qd} on the existence of the right adjoint of $T$). 

\end{remark}


\subsection{Hopf monads are Frobenius}\label{ssec:hm are fm}

Having in mind the results established in Section~\ref{sec:main}, we turn now to the corresponding statements for Hopf monads. We refer to the notations introduced in Sections~\ref{sect:hm} and~\ref{hm and lf}.

Let $T$ a left Hopf monad on a monoidal category $\C$. Then Theorem~\ref{frob1} rephrases as follows:

\begin{theorem}\label{frob4}

Let $T$ be a left Hopf monad on a monoidal category $\C$. Assume that there is an object $C\in \C $, together with $T$-algebra morphisms $\overline u: \I \to TC$, \linebreak $\overline  m: T\I \ot TC \to T\I$, satisfying the analogues of~\eqref{LC}. Suppose that for any $X\in \C$, the diagram below is an equaliser\footnote{Equivalently, that the monad $T$ is of \emph{descent type}.}
\begin{equation}\label{monad corefl eq}
\xymatrix@R=8pt@C=45pt{X \ar[r]^{\eta} & TX \ar@<0.5ex>[r]^{T\eta} \ar@<-0.5ex>[r]_{\eta T} & T^2X }
\end{equation}
and that $T(1 \ot \eta):T(C \ot -) \to T(C \ot T(-))$ is a component-wise monomorphism. 
Then $T(C \ot -)$ is right adjoint to $T$.
\end{theorem}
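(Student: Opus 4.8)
The plan is to deduce this statement directly from Theorem~\ref{frob1}, applied to the Eilenberg--Moore adjunction $L^T \dashv U^T: \C^T \to \C$ of the monad $T$. Recall from Section~\ref{sect:hm} that the comonoidal structure of $T$ makes $\C^T$ monoidal with $U^T$ strict (hence strong) monoidal, that $L^T \dashv U^T$ is a left Hopf adjunction precisely because $T$ is a left Hopf monad, and that the monad it induces is $U^T L^T = T$, with unit $\eta$. The task is therefore to match the three hypotheses of Theorem~\ref{frob1} for this adjunction with the data supplied here, and then to read off the conclusion.

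Next I would set up the dictionary between $\C^T$ and $\C$. The unit object of $\C^T$ is $(\I, T_0)$, the free algebras $L^T\I$ and $L^T C$ have underlying objects $T\I$ and $TC$, and since $U^T$ is strict monoidal, $L^T\I \ot L^T C$ has underlying object $T\I \ot TC$. Hence a pair of $T$-algebra morphisms $\overline u: \I \to TC$ and $\overline m: T\I \ot TC \to T\I$ is exactly a pair of $\C^T$-morphisms $u: \I \to L^T C$ and $m: L^T\I \ot L^T C \to L^T\I$. Because $U^T$ is faithful and carries the comonoidal structure of $L^T$ to $(T_2, T_0)$, the diagrams \eqref{LC} in $\C^T$ hold if and only if their analogues in $\C$ do, which is hypothesis~(1). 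The equaliser condition \eqref{monad corefl eq}, namely $X \xrightarrow{\eta} TX \rightrightarrows T^2 X$, coincides with the coreflexive equaliser \eqref{corefl eq} for $UL = U^T L^T = T$, so it says exactly that $L^T$ is precomonadic, which is hypothesis~(2). Finally, $U^T\bigl(L^T(1 \ot \eta)\bigr) = T(1 \ot \eta)$ is monic by assumption, and a faithful functor reflects monomorphisms, so $L^T(1 \ot \eta)$ is monic, which is hypothesis~(3).

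Theorem~\ref{frob1} then yields that $L^T(C \ot -)$ is right adjoint to $U^T$, giving the two adjunctions $L^T \dashv U^T$ and $U^T \dashv L^T(C \ot -)$. Composing right adjoints, the functor $T = U^T \circ L^T$ acquires the right adjoint $U^T \circ L^T(C \ot -)$; evaluated at $X$ this is the underlying object $T(C \ot X)$ of the free algebra $L^T(C \ot X)$, so $T(C \ot -)$ is right adjoint to $T$, as claimed.

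I expect the only genuine friction to be bookkeeping rather than mathematics: verifying that the comonoidal data $(L^T_2, L^T_0)$ of the free functor descends, along the strict monoidal $U^T$, to the monad's comonoidal data $(T_2, T_0)$ appearing in the analogues of \eqref{LC}, and confirming the order in which the two adjunctions compose. All the substantive work is carried by Theorem~\ref{frob1}; once the dictionary between free-algebra morphisms in $\C^T$ and $T$-algebra morphisms in $\C$ is in place, no further computation is required.
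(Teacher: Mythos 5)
Your proposal is correct and follows essentially the same route as the paper: the paper likewise observes that $\overline u$ and $\overline m$ being $T$-algebra morphisms exhibits $L^TC$ as right dual to $L^T\I$ in $\C^T$, that \eqref{monad corefl eq} is exactly \eqref{corefl eq} for $L^T\dashv U^T$, that $L^T(1\ot\eta)$ is monomorphic iff $T(1\ot\eta)$ is, and then invokes Theorem~\ref{frob1} and composes the adjunctions to get $T=U^TL^T\dashv U^TL^T(C\ot-)=T(C\ot-)$. The bookkeeping you flag (the dictionary between free-algebra morphisms in $\C^T$ and $T$-algebra morphisms in $\C$, via the strict monoidal forgetful functor) is precisely what the paper relies on as well.
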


\begin{proof}

As in Section~\ref{ssec:wirth}, we can see that the object $L^T C =(TC, \mu)$ is right dual to $L^T \I = (T\I, \mu)$ (using that both $\overline u$ and $\overline  m$ are $T$-algebra morphisms). 

Notice that the equaliser~\eqref{monad corefl eq} is precisely~\eqref{corefl eq} for the left Hopf adjunction \linebreak$L^T \dashv U^T$, and that $L^T(1 \ot \eta)$ is component-wise monomorphic if and only if $T(1 \ot \eta)$ is so. Then theorem~\ref{frob1} applies to give $U^T \dashv L^T(C \ot (-))$. In particular we obtain the desired adjunction 
\[
T=U^TL^T \dashv U^TL^T(C \ot (-))=T(C \ot (-))
\]  
\end{proof}

\begin{remark}
\begin{enumerate}

\item Let $\overline\e$ and $\overline\db$ be the evaluation, respectively the coevaluation morphisms of the dual pair $(T\I, TC)$ as in~\eqref{LC-right-dual-LI}. Recall that to give $\overline u$ and $\overline  m$ satisfying~\eqref{LC} is the same as to exhibit $TC$ as right dual for $T\I$, such that $T_2:TC \to TC \ot T\I$ is the twice transpose of $T_2:T\I \to T\I \ot T\I$. Having then $\overline u$ and $\overline  m$ morphisms of $T$-algebras is the same as requiring that $\overline\e$ and $\overline\db$ are morphism of $T$-algebras; that is, $(T\I, TC)$ is a dual pair in $\C^T$, where both are seen as free $T$-algebras.  

\item Let us see what happens in case we only require $T\I$ and $TC$ to form a dual pair in $\C$. First, there is an adjunction $-\ot TC \dashv -\ot T\I$. 
The monad $T$ being comonoidal, and $TC$ carrying the structure of a free $T$-algebra, the functor $- \ot TC$ lifts to $T$-algebras via the distributive law given by the left fusion operator $\bhol: T(- \ot TC) \to T(-) \ot TC$. 
But $T$ is a left Hopf monad,%
\footnote{There is another adjunction we might had considered, namely $T\I \ot- \dashv TC \ot -$. Both functors obviously lift to $T$-algebras. But in order to lift their adjunction, the distributive law corresponding to the left adjoint, namely $\bhor: T(T\I \ot -) \to T\I \ot T(-)$, should be invertible. That is, the monad $T$ should be \emph{right} (pre)Hopf monad, instead of left Hopf.} 
hence the fusion operator $\bhol$ is invertible. So not only $-\ot TC$ but the whole adjunction $-\ot TC \dashv -\ot T\I$ will lift to $\C^T$ (see for example~\cite{johnstone:adjoint-lift}).%
\footnote{The functor $(-) \ot T\I$ does have a lift to $T$-algebras using again the left fusion operator as a distributive law, but this does not mean that the two liftings are the same. After all, a functor might have more than one lift to $T$-algebras (see for example~\cite[Remark~2.3]{bk:coalg-alg}).} %
How does the lifted right adjoint behave like? If we unravel its construction, we see that it maps a $T$-algebra $(X, x:TX \to X)$ to $X \ot T\I$, with $T$-algebra structure:
\[
\resizebox{.95\textwidth}{!}{%
\xymatrix@C=25pt{
T(X \ot T\I) 
\ar[r]^-{1 \ot \overline \db}
&
T(X \ot T\I) \ot TC \ot T\I
\ar[r]^-{{\bhol}^{-1} \ot 1}
& 
T(X \ot T\I \ot TC) \ot T\I
\ar[r]^-{T(1 \ot \overline \e) \ot 1}
&
TX \ot T\I
\ar[r]^-{x \ot 1}
& 
X \ot T\I
}}
\]
where $\overline\e$ and $\overline\db$ as in~\eqref{LC-right-dual-LI}. Instantiating the above at the unit object, we obtain a $T$-algebra structure on $T\I$, providing the left dual of $L^T C=(TC,\mu)$, such that $\overline\e$ and $\overline\db$ become morphisms of $T$-algebras:
\[
\resizebox{.95\textwidth}{!}{%
\xymatrix@C=25pt{
T^2\I 
\ar[r]^-{1 \ot \overline \db}
&
T^2\I \ot TC \ot T\I
\ar[r]^-{{\bhol}^{-1} \ot 1}
& 
T(T\I \ot TC) \ot T\I
\ar[r]^-{T\overline \e \ot 1}
&
T\I \ot T\I
\ar[r]^-{T_0 \ot 1}
& 
T\I
}}
\]
But is this the free $T$-algebra structure $\mu:T^2\I \to T\I$? 
The computations below show that this is indeed the case if $\overline m$ is a morphism of $T$-algebras (the diagram marked ($*$)): 
\[
\resizebox{.975\textwidth}{!}{%
\xymatrix@R=20pt{
T^2\I 
\ar[r]^-{1 \ot \overline u}
\ar[ddd]_{\mu}
\ar@{}[dddrrr]|{(M)}
&
T^2\I \ot TC
\ar[r]^-{1 \ot T_2}
&
T^2\I \ot TC \ot T\I
\ar[r]^-{{\bhol}^{-1}\ot 1}
\ar@{-}@<+.25ex>`d[ddr][ddr]
\ar@{-}@<-.25ex>`d[ddr][ddr]
&
T(T\I \ot TC) \ot T\I
\ar[r]^-{T\overline  m \ot 1}
\ar[d]^{T_2 \ot 1}
&
T^2\I \ot T\I
\ar[ddd]_{\mu \ot 1}
\ar[r]^-{TT_0 \ot 1}
&
T\I \ot T\I
\ar[dddd]^{T_0 \ot 1}
\\
&
&
&
T^2\I \ot T^2C \ot T\I
\ar[d]^{1 \ot \mu \ot 1}
\ar@{}[dr]|{(*)}
&
&
\\
&
&
&
T^2\I \ot TC \ot T\I
\ar[d]_{\mu \ot 1\ot 1}
&
&
\\
T\I
\ar[rr]^{1 \ot \overline  u}
\ar@{-}@<-.25ex>`d[drrr][drrr]
\ar@{-}@<+.25ex>`d[drrr][drrr]
&
&
T\I \ot TC
\ar[r]^-{1 \ot T_2}
\ar[dr]^{\overline  m}
\ar@{}[dl]|-{\hyperref[LC]{\eqref{LC}\text{-(iii)}}}
&
T\I \ot TC \ot T\I 
\ar[r]^{\overline  m \ot 1}
\ar@{}[d]|-{\hyperref[LC]{\eqref{LC}\text{-(ii)}}}
&
T\I \ot T\I
\ar[dr]^{T_0 \ot 1}
\\
&
&
&
T\I 
\ar[ur]^{T_2}
\ar@{=}[rr]
&
&
T\I
}}
\]
Notice also that $\overline  u$ is the transpose of $T_0$ under duality:
\[
\overline u = \xymatrix{ \I \ar[r]^-{\overline\db} & TC \ot T\I \ar[r]^-{1 \ot T_0} & TC}
\]
As the latter is indeed a morphism of $T$-algebras, so it will be $\overline  u$. Consequently, in Theorem~\ref{frob4}, the hypothesis that $\overline u$ is a morphism of $T$-algebras is redundant. 

\end{enumerate}
\end{remark}

Particularizing now Theorem~\ref{frob4} to the case $C \cong \I$ and using Theorem~\ref{frob3}, Proposition~\ref{prop:fmf} and Remark~\ref{rem:biHopf}, we obtain the following:

\begin{theorem}\label{frob5}

Let $T$ be a Hopf monad on a monoidal category $\C$, such that the free $T$-algebra $T\I$ is a Frobenius monoid in $\C^T$,\footnote{Consequently, $T\I$ will also be a Frobenius monoid in $\C$.} with comonoid structure given by $T_2, T_0$, and that $T$ is of descent type. 

Then $T$ becomes right adjoint to itself; in particular, $T$ is not only a Hopf monad but also a biHopf monad, a Frobenius monad and a Frobenius monoidal functor.

\end{theorem}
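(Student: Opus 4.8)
The plan is to deduce Theorem~\ref{frob5} from the results of Section~\ref{sec:main} applied to the canonical Hopf adjunction $L^T \dashv U^T : \C^T \to \C$ associated to the Hopf monad $T$, and then to transport the conclusions back along $T = U^T L^T$, using that $U^T$ is strict monoidal.

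First I would translate the three hypotheses into the language of the adjunction $L^T \dashv U^T$. Since $T$ is a (two-sided) Hopf monad, the adjunction $L^T \dashv U^T$ is a (two-sided) Hopf adjunction with $U^T$ strong (in fact strict) monoidal, by the correspondence recalled in Section~\ref{sect:hm}. The free algebra $L^T\I = (T\I, \mu)$ carries the induced comonoidal structure $(L^T_2, L^T_0)$ of \eqref{eq:induced_colax0}, whose underlying arrows in $\C$ are exactly $(T_2, T_0)$; hence the hypothesis that $T\I$ is a Frobenius monoid in $\C^T$ with comonoid part $(T_2, T_0)$ says precisely that $(L^T\I, L^T_2, L^T_0)$ is a Frobenius monoid in $\C^T$. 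Finally, $T$ being of descent type is by definition the statement that the equaliser \eqref{monad corefl eq}---which is \eqref{corefl eq} for $L^T \dashv U^T$---exists for all $X$; that is, $L^T$ is precomonadic.

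With the hypotheses in this form, Theorem~\ref{frob3} applies directly: $L^T$ is right adjoint to $U^T$, and the adjunction $U^T \dashv L^T$ is left coHopf. As $L^T \dashv U^T$ is two-sided Hopf, Remark~\ref{rem:biHopf} upgrades this to a two-sided ambidextrous adjunction $L^T \dashv U^T \dashv L^T$ that is simultaneously left and right coHopf, and Proposition~\ref{prop:fmf} shows $L^T$ is a Frobenius monoidal functor. It then remains to read off the four conclusions for $T$ itself. Composing $L^T \dashv U^T$ with $U^T \dashv L^T$ shows that the right adjoint of $T = U^T L^T$ is again $U^T L^T = T$, so $T$ is right adjoint to itself; equivalently, since $U^T$ is now a Frobenius functor (its left and right adjoints both being $L^T$), the monad $T$ is a Frobenius monad, by the characterisation recalled at the end of Section~\ref{ssec:frob mon}. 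The comonad structure carried by this self-right-adjoint $T$ is that generated by $U^T \dashv L^T$; since the latter is coHopf, this comonad is a Hopf comonad (Section~\ref{sect:hm}), so $T$ is a biHopf monad. Lastly, $U^T$ is strict monoidal---hence Frobenius monoidal---and $L^T$ is Frobenius monoidal by Proposition~\ref{prop:fmf}, so their composite $T = U^T L^T$ is Frobenius monoidal, being a composite of such functors.

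The only genuinely delicate point is the bookkeeping in the second paragraph: verifying that the $\C^T$-comonoid structure $(L^T_2, L^T_0)$ on the free algebra $L^T\I$ has underlying $\C$-arrows $(T_2, T_0)$, so that the Frobenius-monoid hypothesis matches the one demanded by Theorem~\ref{frob3}. This is a routine consequence of the definition \eqref{eq:induced_colax0} of the induced comonoidal structure together with $U^T$ being strict monoidal and faithful; once it is in place, every remaining assertion is a formal assembly of the already-established Theorem~\ref{frob3}, Proposition~\ref{prop:fmf} and Remark~\ref{rem:biHopf}, so I expect no further obstruction.
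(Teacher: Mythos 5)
Your proposal is correct and follows essentially the same route as the paper, which obtains Theorem~\ref{frob5} precisely by specialising Theorem~\ref{frob4} to $C\cong\I$ (equivalently, applying Theorem~\ref{frob3} to $L^T\dashv U^T$) and then invoking Proposition~\ref{prop:fmf} and Remark~\ref{rem:biHopf}. The extra bookkeeping you spell out---that the $\C^T$-comonoid $(L^T\I,L^T_2,L^T_0)$ has underlying arrows $(T_2,T_0)$, and that the conclusions for $T=U^TL^T$ follow by composing the adjunctions and using Street's Frobenius-functor characterisation---is exactly what the paper leaves implicit.
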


We end the section by an example which shows that not any Hopf monad is Frobenius monoidal, nor a Frobenius monad.

\begin{example} Let $\Bbbk$ be a commutative field of characteristic different from $2$, and consider the $\mathbbm Z_2$-graded Hopf algebra $A=\Bbbk [x]/(x^2)$, with grading $A_0=\Bbbk$ and $A_1$ spanned by $x$. It is a monoid as usual, with the induced (polynomial) multiplication and unit $1$. As for the comonoid structure, the element $1$ is group-like and $x$ is primitive. The antipode acts as $S(1)=1, S(x)=-x$. This is a Hopf algebra in the braided category $\C$ of (finite dimensional) $\mathbbm Z_2$-graded vector spaces, which is not graded Frobenius (that is, it is not a Frobenius monoid in $\C$), but still a Frobenius monoid in the category of vector spaces~\cite{dnn:frob}. 

Then tensoring with $A$ yields the Hopf monad $T=A \ot -$ on the (braided) autonomous category $\C$. But $T$ fails to be a Frobenius monad, and also a Frobenius monoidal functor, because in $\C$, the Hopf algebra $T\I = A$ is not a Frobenius monoid.

\end{example}


\begin{thebibliography}{10}

\bibitem{abrams}
L.~Abrams.
\newblock {Modules, {C}omodules, and {C}otensor {P}roducts over Frobenius
  {A}lgebras}.
\linebreak
\newblock {\em J. Algebra} 219(1):201--213, 1999.

\bibitem{balan:frob}
A.~Balan.
\newblock{A {N}ote on {F}robenius {M}onoidal {F}unctors on {A}utonomous {C}ategories}.
\newblock arXiv preprint arXiv:1407.1012, 2014.

\bibitem{bk:coalg-alg}
A.~Balan and A.~Kurz.
\newblock{On {C}oalgebras over {A}lgebras}.
\newblock {\em Theor. Comput. Sci.} 412(38): 4989--5005, 2011.

\bibitem{ttt} M.~Barr and C.~Wells.
\newblock{\em Toposes, {T}riples and {T}heories}. Springer-Verlag, NewYork, 1985. Also available as \href{http://www.tac.mta.ca/tac/reprints/articles/12/tr12abs.html}{Repr. Theory Appl. Categ. 12, 1-288, 2005}.

\bibitem{blute:hopf}
R.~F. Blute.
\newblock Hopf {A}lgebras and {L}inear {L}ogic.
\newblock {\em Math. Struct. Comput. Sci.} 6(2):189--212, 1996.

\bibitem{blv:hmmc}
A.~Brugui\`eres, S.~Lack, and A.~Virelizier.
\newblock {Hopf Monads on Monoidal Categories}.
\newblock {\em Adv. Math.} 227(2):745--800, 2011.

\bibitem{bv:hm}
A.~Brugui{\`e}res and A.~Virelizier.
\newblock {Hopf Monads}.
\newblock {\em Adv. Math.} 215(2):679--733, 2007.

\bibitem{bv:qd}
A.~Brugui{\`e}res and A.~Virelizier.
\newblock {Quantum {D}ouble of {H}opf {M}onads and {C}ategorical {C}enters}.
\newblock {\em Trans. AMS} 364(3):1225--1279, 2012.

\bibitem{cmz:Frob-book}
S.~Caenepeel, G.~Militaru, and S.~Zhu.
\newblock {\em {Frobenius and Separable Functors for Generalized Module
  Categories and Nonlinear Equations}}.
\newblock LNM 1787. Springer, Berlin, 2002.

\bibitem{cls:hmc}
D.~Chikhladze, S.~Lack, and R.~Street.
\newblock {Hopf Monoidal Comonads}.
\newblock {\em {Theory Appl. Categ.}} 24(19):554--563, 2010.

\bibitem{cockett-koslowski-seely:lb}
J.~R.~B. Cockett, J.~Koslowski, and R.~A.~G. Seely.
\newblock {Introduction to Linear Bicategories}.
\newblock {\em Math. Struct. Comput. Sci.} 10(2):165--203, 2000.

\bibitem{cockett-seely:wdc:short}
J.~R.~B. Cockett and R.~A.~G. Seely.
\newblock {Weakly Distributive Categories}.
\newblock In M.~P. Fourman, P.~T. Johnstone and A.~M. Pitts (eds.), {\em
  {Applications of {C}ategories in {C}omputer {S}cience}}, Lond. Math. Soc.
  Lect. Note Ser. 177, 45--65. Cambridge Univ. Press, 1992.

\bibitem{cockett-seely:wdc}
J.~R.~B. Cockett and R.~A.~G. Seely.
\newblock {Weakly Distributive Categories}.
\newblock {\em {J. Pure Appl. Algebra}} 114(2):133 -- 173, 1997.

\bibitem{cockett-seely:lf}
J.~R.~B. Cockett and R.~A.~G. Seely.
\newblock {Linearly Distributive Functors}.
\newblock {\em {J. Pure Appl. Algebra}} 143(1-3):155 -- 203, 1999.

\bibitem{day-pastro:Frob}
B.~Day and C.~Pastro.
\newblock {Note on Frobenius Monoidal Functors}.
\newblock {\em New York J. Math.} 14:733--742, 2008.

\bibitem{day-street:quantum}
B.~Day and R.~Street.
\newblock {Quantum Categories, Star Autonomy, and Quantum Groupoids}.
\newblock In: G.~Janelidze, B.~Pareigis and W.~Tholen (eds.), {\em {Galois
  Theory, Hopf Algebras and Semiabelian Categories}}, 187--225, AMS,
  2004.

\bibitem{ds:centres}
B.~Day and R.~Street.
\newblock {Centres of Monoidal Categories of Functors}.
\newblock In: A.~Davydov, M.~Batanin, M.~Johnson, S.~Lack and A.~Neeman (eds.), {\em {Categories in Algebra, Geometry and Mathematical Physics}},
  Contemp. Math. 431, 187--202, AMS, 2007. 

\bibitem{dnn:frob}
S.~{D\u{a}sc\u{a}lescu}, C.~{N\u{a}st\u{a}sescu}, and L.~{N\u{a}st\u{a}sescu}.
\newblock {Frobenius Algebras of Corepresentations and Group-Graded Vector
  Spaces}.
\newblock {\em J. Algebra} 406:226--250, 2014.

\bibitem{dubuc}
E. ~Dubuc. Adjoint Triangles. In: S. MacLane (ed.), {\em Reports of the Midwest Category Seminar II}, LNM 61, 69--91, Springer Berlin Heidelberg, 1968.

\bibitem{egger:star}
J.~M. Egger.
\newblock {Star-Autonomous Functor Categories}.
\newblock {\em {Theory Appl. Categ.}} 20(11):307--333, 2008.

\bibitem{egger}
J.~M. Egger.
\newblock {The Frobenius Relations Meet Linear Distributivity.}
\newblock {\em {Theory Appl. Categ.}} 24(2):25--38, 2010.

\bibitem{e-k:cc}
S.~Eilenberg and G.~M. Kelly.
\newblock {Closed Categories}.
\newblock In S.~Eilenberg, D.~K. Harrison, S.~MacLane, and H.~R{\"o}hrl (eds.), {\em {Proc. Conf. Categorical Algebra, La Jolla 1965}}, 421--562, Springer Berlin Heidelberg, 1966.

\bibitem{em:adjoint}
S.~Eilenberg and J.~C. Moore.
\newblock {Adjoint Functors and Triples}.
\newblock {\em Illinois J. Math.} 9:381--398, 1965.

\bibitem{may-tac}
H.~Fausk, P.~Hu, and J.~May.
\newblock {Isomorphisms between Left and Right Adjoints}.
\newblock {\em Theory Appl. Categ.} 11(4):107--131, 2003.

\bibitem{freyd-yetter:coherence}
P.~Freyd and D.~N. Yetter.
\newblock {Coherence Theorems via Knot Theory}.
\newblock {\em {J. Pure Appl. Algebra}} 78:49--76, 1992.

\bibitem{fs:farmc}
J.~Fuchs and C.~Stigner.
\newblock {On Frobenius Algebras in Rigid Monoidal Categories.}
\newblock {\em {Arab. J. Sci. Eng., Sect. C}} 33(2):175--191, 2008.

\bibitem{js:braided}
A.~Joyal and R.~Street.
\newblock {Braided Tensor Categories}.
\newblock {\em {Adv. Math.}} 102(1):20--78, 1993.

\bibitem{johnstone:adjoint-lift}
P.~T.~Johnstone.
\newblock {Adjoint {L}ifting {T}heorems for {C}ategories of {A}lgebras}. 
\newblock {\em {Bull. Lond. Math. Soc.}} 7:294--297, 1975. 

\bibitem{kelly:many}
G.~M. Kelly.
\newblock {Many-Variable Functorial Calculus. I}.
\newblock In G.~M. Kelly, M.~Laplaza and S.~Mac~Lane (eds.), {\em {Coherence
  in Categories}}, LNM 281, 66--105. Springer, 1972.

\bibitem{kellydoctrinal}
G.~M. Kelly.
\newblock {Doctrinal Adjunction}.
\newblock In G.~M. Kelly (ed.), {\em {Category Seminar}}, LNM 420, 257--280. {Springer}, Berlin Heidelberg, 1974.

\bibitem{kelly-laplaza:coherence}
G.~M. Kelly and M.~L. Laplaza.
\newblock {Coherence for Compact Closed Categories}.
\newblock {\em {J. Pure Appl. Algebra}} 19:193--213, 1980.

\bibitem{lawvere}
F.~W. Lawvere.
\newblock {Ordinal Sums and Equational Doctrines}.
\newblock In A.~Dold and B.~Eckmann (eds.), {\em {Seminar on Triples and Categorical Homology Theory}}, LNM
  80, 141--155, Springer Berlin Heidelberg, 1969.
  
\bibitem{larson-sweedler}
R.~G.~Larson and M.~E.~Sweedler.
\newblock {An Associative Orthogonal Bilinear Form for Hopf Algebras}. 
\newblock {\em Amer. J. Math.} 91:75--94, 1969.  

\bibitem{maclane}
S.~{MacLane}.
\newblock {\em {Categories for the Working Mathematician}}.
\newblock GTM 5. Springer-Verlag New York, 4th edition, 1998.

\bibitem{mes-wis:nbhm}
B.~Mesablishvili and R.~Wisbauer.
\newblock {Notes on bimonads and Hopf monads}.
\newblock {\em {Theory Appl. Categ.}}, 26(10):281--303, 2012.

\bibitem{moerdijk}
I.~Moerdijk.
\newblock {Monads on Tensor Categories}.
\newblock {\em {J. Pure Appl. Algebra}} 168(2-3):189--208, 2002.

\bibitem{pareigis}
B.~Pareigis. 
\newblock {When Hopf Algebras Are Frobenius Algebras}. 
\newblock {\em J. Algebra} 18:588--596, 1971.

\bibitem{sh}
K.~Shimizu.
\newblock {Characterizations of Unimodular Finite Tensor Categories}.
\newblock arXiv preprint arXiv:1402.3482, 2014.

\bibitem{shulman}
M.~Shulman.
\newblock {Comparing Composites of Left and Right Derived Functors}.
\newblock {\em New York J. Math} 17:75--125, 2011.

\bibitem{street:frob}
R.~Street.
\newblock {Frobenius Monads and Pseudomonoids}.
\newblock {\em J. Math. Phys.} 45(10):3930--3948, 2004.

\end{thebibliography}
\end{document}